\title{On closed  Lie ideals of certain tensor products of
  $C^*$-algebras}
\author[V P Gupta]{Ved Prakash Gupta} \address{School of Physical
  Sciences,\\ Jawaharlal Nehru University\\ New Delhi-110067, INDIA.}
\email{vedgupta@mail.jnu.ac.in}
\author[R Jain]{Ranjana Jain} \address{Department of
  Mathematics\\ University of Delhi\\ Delhi-110007, INDIA.}
\email{rjain@maths.du.ac.in} \thanks{The first named author was
  supported partially by a UPE II project (with Id 228) of Jawaharlal
  Nehru University, New Delhi and the second named author was supported by individual R $\&$ D
  grants 2014-15 and 2015-16 of University of Delhi, Delhi.}
\newtheorem{thm}{\sc Theorem}[section]
\newtheorem{cor}[thm]{\sc Corollary}
\newtheorem{prop}[thm]{\sc Proposition}
\newtheorem{lemma}[thm]{\sc Lemma}
\theoremstyle{remark}
\newtheorem{defn}[thm]{\sc Definition}
\newtheorem{rem}[thm]{\sc Remark}
\numberwithin{equation}{section}
\newcommand{\oop}{\widehat\otimes}
\newcommand{\seq}{\subseteq}
\newcommand{\oh}{\otimes^h}
\newcommand{\omin}{\otimes^{\min}}
\newcommand{\omax}{\otimes^{\max}}
\newcommand{\obp}{\otimes^\gamma}
\newcommand{\oi}{\otimes^\lambda}
\newcommand{\ra}{\rightarrow}
\newcommand{\ot}{\otimes}
\newcommand{\C}{\mathbb{C}}
\newcommand{\ol}{\overline}
\newcommand{\mcal}{\mathcal}
\begin{document}

\keywords{$C^\ast$-algebras, commutators, ideals, Lie ideals, tensor
  products, quasi-cental approximate identity. }

\subjclass[2010]{46L06}

\begin{abstract}
For a simple $C^*$-algebra $A$ and any other $C^*$-algebra $B$, it is
proved that every closed ideal of $A \omin B$ is a product ideal if
either $A$ is exact or $B$ is nuclear. Closed commutator of a closed
ideal in a Banach algebra whose every closed ideal possesses a
quasi-central approximate identity is described in terms of the
commutator of the Banach algebra.  If $\alpha$ is either the Haagerup
norm, the operator space projective norm or the $C^*$-minimal norm,
then this allows us to identify all closed Lie ideals of $A \ot^\alpha
B$, where $A$ and $B$ are simple, unital $C^*$-algebras with one of
them admitting no tracial functionals, and to deduce that every
non-central closed Lie ideal of $B(H) \ot^\alpha B(H)$ contains the
product ideal $K(H) \ot^\alpha K(H)$.  Closed Lie ideals of $A \omin
C(X)$ are also determined, $A$ being any simple unital $C^*$-algebra
with at most one tracial state and $X$ any compact Hausdorff
space. And, it is shown that closed Lie ideals of $A \ot^\alpha K(H)$
are precisely the product ideals, where $A$ is any unital
$C^*$-algebra and $\alpha$ any completely positive uniform tensor
norm.
\end{abstract}

\maketitle

\section{\bf {Introduction}}
A complex associative algebra $A$ inherits a canonical Lie algebra
structure given by the bracket $A \times A \ni (x, y) \mapsto [x,y]:=
xy-yx \in A$ and a subspace $L$ of $A$ is said to be a {\em Lie ideal}
if $[a, x] \in L$ for all $a \in A$ and $x \in L$.

Analysis of ideal structures of various tensor products of operator
algebras has been an important project and a good deal of work has
been done in this direction - see, for instance, \cite{ass, guichardet, 
  jk11, JK-edin, Kum01, kr-1, Tak}.  On the other hand, there
also exists an extensive literature devoted towards the study of Lie
ideals, directly as well as through ideals of the algebra, in pure as
well as Banach and operator algebras - see \cite{bresar08, fong,
  Mar95, Mar10, MM, Miers} and the references therein.

The analysis of closed Lie ideals in operator algebras is primarily
motivated by the evident relationship between commutators, projections
and closed Lie ideals in $C^*$-algebras. For instance, Pedersen
(\cite[Lemma 1]{Ped80}) showed that the closed subspace
$L(\mathcal{P})$ and the $C^*$-subalgebra $A(\mathcal{P})$ generated
by the set of projections $\mcal{P}$ of a $C^*$-algebra $A$ are both
closed Lie ideals of $A$; and, moreover, if $A$ is simple with a
non-trivial projection and if $A$ has at most one tracial state then
$A = L(\mathcal{P})$, i.e., the span of the projections is dense in
$A$ (\cite[Corollary 4]{Ped80}).

However, unlike the ideals of tensor products of operator algebras,
not much is known about the closed Lie ideals of various tensor
products of operator algebras. Among the few known results in this
direction, Marcoux \cite{Mar95}, in 1995, proved that for a UHF
$C^*$-algebra $A$, a subspace $L$ of $A \omin C(X)$ is a closed Lie ideal
if and only if
 $$ L = \ol{sl(A) \ot J} + \C I \otimes S$$ for some closed ideal $J$
and some closed subspace $S$ in $C(X)$, where $sl(A) := \{a \in A:
tr_A(a)=0\}$ with respect to the unique faithful tracial state $tr_A$
on $A$. Then, in 2008, relying heavily on the Lie ideal structure of
tensor products of pure algebras, Bre\v{s}ar et al., in
\cite{bresar08}, proved that for a unital Banach algebra $A$, the
closed Lie ideals of $A \omin K(H)$, of the Banach space projective
tensor product $A \obp K(H)$ and of the Banach space injective tensor
product $A \oi K(H)$ (if it is a Banach algebra) are precisely the
closed ideals.

In this article, we focus on analyzing the (closed) ideal and Lie ideal
structures of certain tensor products of $C^*$-algebras. Here is a
quick overview of the structure of this paper.

In \Cref{simple}, we generalize a characterization (of \cite{fong,
  Mar95}) for closed Lie ideals via invariance under unitaries in a
simple unital $C^*$-algebra containing non-trivial projections and
admitting at most one tracial state. Then, in \Cref{ideal-structure},
following the footsteps of \cite{ass, JK-edin}, for a simple
$C^*$-algebra $A$ and any $C^*$-algebra $B$ we discuss the ideal
structure of $A \omin B$ when $A$ is exact or $B$ is nuclear.

\Cref{q-c-a-i} is the key part of this article. Starting with the
analysis of closed commutators of closed ideals, we move on to obtain
a generalization (see \Cref{ideal-lie-ideal}) of a characterization of
closed Lie ideals in $C^*$-algebras given by Bre\v{s}ar et
al.~\cite{bresar08} to Banach algebras in which sufficiently many
closed ideals possess {\em quasi-central approximate
  identities}. Using these, when $\alpha$ is either the Haagerup norm,
the operator space projective norm or the $C^*$-minimal norm, we
identify all closed Lie ideals of $A \ot^\alpha B$, where $A$ and $B$
are simple, unital $C^*$-algebras with one of them admitting no
tracial functionals, and, deduce that $B(H) \ot^\alpha B(H)$ has only
one non-zero {\em central} Lie ideal, namely, $\C ( 1 \ot 1)$, whereas
every {\em non-central} closed Lie ideal contains the product ideal
$K(H) \ot^\alpha K(H)$.

In \Cref{a-cx}, we basically show that the techniques of Marcoux and
Bre\v{s}ar et al.~ can be applied to obtain an analogy to Marcoux's
result (\cite{Mar95}) that determines the structure of Lie ideals of
$A \omin C(X)$, where $A$ is any simple unital $C^*$-algebra with at
most one tracial state and $X$ is any compact Hausdorff space. And,
finally, in \Cref{cput}, applying a deep result of Bre\v{s}ar et
al.~\cite{bresar08}, we deduce that closed Lie ideals of $A \ot^\alpha
K(H)$ are precisely the product ideals, where $A$ is a unital
$C^*$-algebra and $\alpha$ a {\em completely positive uniform tensor
  norm}.

\section{Closed Lie ideals of simple unital $C^*$-algebras}\label{simple}
In order to maintain distinction between algebraic and topological
simplicity, we shall say that a Banach algebra is {\em topologically
  simple} if it does not contain any non-trivial closed two sided
ideal in it. However, since maximal ideals are closed and every proper
ideal is contained in a maximal ideal in a unital Banach algebra, it is
easily seen that the two notions are same for unital Banach algebras.

Recall that, a {\em tracial state} $\varphi$ on a $C^*$-algebra $A$ is a
positive linear functional of norm one satisfying $\varphi(ab) =
\varphi (ba)$ for all $a, b \in A$. If $A$ is unital, then a tracial
state is unital, i.e., $\varphi (1) = \|\varphi\| = 1$. The collection
of tracial states on $A$ is denoted by $\mcal{T}(A)$.

Note that, for each $\varphi \in \mcal{T}(A)$, $\ker(\varphi)$ is
clearly a closed Lie ideal in $A$ of co-dimension $1$ and contains the
closed {\em commutator Lie ideal} $\overline{[A, A]}$. In particular,
if $\mcal{T}(A) \neq \emptyset$, then $sl(A) := \cap \{ \ker
\,(\varphi) : {\varphi \in \mcal{T}(A)}\}$ is also a closed Lie ideal
in $A$ and contains $\overline{[A, A]}$.  Cuntz and Pederson
(\cite{CP}) proved that they are, in fact, equal. 

\begin{thm}\label{cp} (\cite[Theorem 2.9]{CP}, \cite[Theorem 1]{Pop})
  Let $A$ be a  $C^*$-algebra. Then, the following hold:
  \begin{enumerate} \item  $ \overline{[A,
     A]}$ = $\left\{ \begin{array}{ll}
      sl(A) & \mathit{if}\     \mcal{T}(A) \neq
 \emptyset,\ \mathit{and} \\ A & \mathit{if}\     \mcal{T}(A) =
 \emptyset.
    \end{array}
    \right.$
\item  If $A$ is unital and $\mcal{T}(A) =
  \emptyset$, then ${[A,A]} = A$.
\end{enumerate}
  \end{thm}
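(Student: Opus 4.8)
The plan is to derive part (1) from a description of the continuous tracial functionals on $A$, and to obtain part (2) by invoking Pop's sharpening of (1) in the unital, trace-free case.

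First I would set up the duality. Since $\overline{[A,A]}$ is a closed subspace of the Banach space $A$, the Hahn--Banach theorem gives $\overline{[A,A]} = \bigcap\{\ker f : f \in A^*,\ f|_{[A,A]} = 0\}$. Now $f$ vanishes on $[A,A]$ if and only if $f(xy) = f(yx)$ for all $x,y \in A$, that is, if and only if $f$ is a continuous \emph{tracial functional}. Thus everything reduces to proving the key claim that every continuous tracial functional on $A$ is a complex-linear combination of tracial states; the inclusion $\overline{[A,A]} \seq sl(A)$ (when $\mcal{T}(A)\neq\emptyset$) noted before the statement is the trivial half of this.

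The heart of the matter is a Jordan decomposition that respects the trace. Writing a tracial $f$ as $f = f_1 + i f_2$ with $f_1, f_2$ Hermitian, one sees that $f$ is tracial if and only if $f_1$ and $f_2$ are, so I may assume $f$ Hermitian. Passing to the enveloping von Neumann algebra $M = A^{**}$, the normal extension $\tilde f$ is again Hermitian and tracial (by $w^*$-density of $A$ and separate normality of the product), and it admits a Jordan decomposition $\tilde f = \tilde f^+ - \tilde f^-$ into normal positive functionals. The point --- and the step I expect to be the main obstacle --- is that the naive order decomposition need not be tracial; this is repaired by \emph{uniqueness} of the Jordan decomposition together with the fact that a normal functional is tracial precisely when it is invariant under all unitary conjugations $x \mapsto uxu^*$. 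Since each such conjugation is a normal $*$-automorphism of $M$ preserving positivity and the mutual singularity of the two parts, uniqueness forces $\tilde f^\pm$ to be individually invariant, hence tracial. Restricting to $A$ yields $f = f^+ - f^-$ with $f^\pm$ positive tracial functionals; and any nonzero positive tracial functional $g$ equals $\|g\|\,\varphi$ for the tracial state $\varphi = g/\|g\| \in \mcal{T}(A)$ (using $\|g\| = \lim_\lambda g(e_\lambda)$ along an approximate unit). This proves the claim.

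Part (1) now follows at once. If $\mcal{T}(A) = \emptyset$ there are no nonzero positive tracial functionals, hence no nonzero tracial functionals, so the intersection of their kernels is all of $A$ and $\overline{[A,A]} = A$. If $\mcal{T}(A) \neq \emptyset$, the tracial functionals have the same common kernel as the tracial states, whence $\overline{[A,A]} = \bigcap_{\varphi \in \mcal{T}(A)} \ker\varphi = sl(A)$. For part (2), part (1) already gives that $[A,A]$ is dense in the unital, trace-free $A$; the remaining content is to remove the closure. Here I would invoke Pop's theorem that in a unital $C^*$-algebra with $\mcal{T}(A) = \emptyset$ every element is in fact a finite sum of commutators, the absence of a trace being exactly what allows $1$ to be written as a sum of commutators and what yields a uniform bound making the commutator space closed; I would treat this as a separate, deeper ingredient rather than reprove it.
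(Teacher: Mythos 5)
Your argument is correct, but note that the paper offers no proof of this statement at all: it is quoted directly from Cuntz--Pedersen and Pop, so the honest comparison is between your reconstruction and the cited sources rather than anything in the text. For part (1) your route --- Hahn--Banach duality reducing the identity to the claim that every bounded tracial functional is a complex combination of tracial states, then extending a Hermitian tracial $f$ to a normal tracial functional on $A^{**}$ (via separate $w^*$-continuity of multiplication, applied once in each variable) and using uniqueness of the normal Jordan decomposition together with $\mathrm{Ad}_u$-invariance to conclude that $\tilde f^{\pm}$ are themselves tracial --- is sound and complete; the one point worth making explicit is that $\|\tilde f^{\pm}\circ \mathrm{Ad}_u\| = \tilde f^{\pm}(u1u^*) = \|\tilde f^{\pm}\|$, so the competing decomposition $\tilde f = (\tilde f^{+}\circ \mathrm{Ad}_u) - (\tilde f^{-}\circ \mathrm{Ad}_u)$ still satisfies the norm additivity that characterizes the Jordan decomposition, which is exactly what the uniqueness step needs. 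This is genuinely different from (and shorter than) Cuntz and Pedersen's original argument, which proceeds through their equivalence relation $a\sim b$ (meaning $a=\sum x_n^*x_n$, $b=\sum x_nx_n^*$) and a Riesz-type decomposition to show that the bounded traces form a sublattice of the Hermitian part of $A^*$; their machinery yields more (the identification of the trace-annihilated self-adjoint elements with limits of sums of self-commutators), while yours yields exactly the displayed identity. Deferring part (2) to Pop is appropriate: passing from density of $[A,A]$ to the algebraic equality $[A,A]=A$ in the unital trace-free case is a genuinely separate and harder theorem, and the paper likewise only cites it.
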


It turns out that $\overline{[A, A]}$ is the only non-trivial closed
Lie ideal for a large class of $C^*$-algebras. The following
identifications of Lie ideals were made in \cite[Theorem 2.5]{MM} and
\cite[Proposition 5.23]{bresar08}, and we will require this list in our
discussions ahead.

\begin{prop}(\cite{MM, bresar08})\label{ideal-factor}
  Let $A$ be a simple unital $C^*$-algebra.
  \begin{enumerate} 
    \item  If $A$ has no tracial states, then the only Lie
      ideals of $A$ are $\{0\}$, $\mathbb{C} 1$ and $A$.
    \item If $A$ has a unique tracial state, then the only closed Lie
      ideals of $A$ are $\{0\}$, $\mathbb{C} 1$, $ \text{sl}(A)$ and $A$.
      \end{enumerate}
  \end{prop}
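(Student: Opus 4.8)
The plan is to reduce the classification to a purely algebraic dichotomy for Lie ideals of a simple ring and then use \Cref{cp} to pin down the relevant commutator space in each of the two cases. First I would record the two structural facts that make this work. The first is that $A$, being a \emph{unital} simple $C^*$-algebra, is simple as an abstract ring; this is precisely the coincidence of algebraic and topological simplicity for unital Banach algebras noted above (a nonzero proper algebraic ideal would sit inside a closed maximal ideal, contradicting topological simplicity). The second is that its centre is trivial, $Z(A) = \C 1$, which I would establish by a short spectral argument: if a self-adjoint $z \in Z(A)$ had two points in its spectrum, choose nonzero continuous functions $f, g$ on $\mathrm{Sp}(z)$ with $fg = 0$; then $f(z)$ is central, so $\overline{f(z) A}$ is a closed two-sided ideal which is nonzero (it contains $f(z)$) and proper (if $f(z) a_n \to 1$ then $g(z) f(z) a_n = 0 \to g(z) \neq 0$, a contradiction), violating simplicity. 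Hence $\mathrm{Sp}(z)$ is a single point and $Z(A) = \C 1$.

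With these in hand, the engine of the proof is Herstein's theorem on the Lie structure of a simple ring $R$ of characteristic different from $2$: every Lie ideal $L$ of $R$ satisfies either $L \subseteq Z(R)$ or $[R,R] \subseteq L$. Applying this to $A$ (which has characteristic zero and centre $\C 1$), any Lie ideal $L$ either lies in $\C 1$ — forcing $L \in \{\{0\}, \C 1\}$ — or contains the commutator $[A,A]$.

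For part (1), $\mcal{T}(A) = \emptyset$, so \Cref{cp}(2) gives $[A,A] = A$ outright. Thus a noncentral Lie ideal $L$ satisfies $A = [A,A] \subseteq L$, i.e.\ $L = A$, and the only Lie ideals are $\{0\}$, $\C 1$ and $A$. For part (2), $\mcal{T}(A) \neq \emptyset$; here I restrict to \emph{closed} Lie ideals so that I may pass to closures. A closed noncentral Lie ideal $L$ contains $[A,A]$, hence $\overline{[A,A]} = sl(A)$ by \Cref{cp}(1). Since $sl(A) = \ker(tr_A)$ has codimension one (with $tr_A$ the unique tracial state), the only subspaces lying between $sl(A)$ and $A$ are $sl(A)$ and $A$ themselves, both readily seen to be closed Lie ideals; together with the central possibilities this leaves exactly $\{0\}$, $\C 1$, $sl(A)$ and $A$.

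The main obstacle is not the $C^*$-algebraic bookkeeping but securing the algebraic dichotomy in the form required: one must check that Herstein's theorem genuinely applies, which hinges on $A$ being simple as a ring and on the centre being exactly $\C 1$, and one must take care that in part (2) the closedness of $L$ is essential — it is what upgrades $[A,A] \subseteq L$ to $sl(A) \subseteq L$, whereas the algebraic version of the statement fails for non-closed Lie ideals once $A$ carries a trace. A minor point to dispatch is the degenerate case $A = \C$, in which the four subspaces of (2) collapse but the statement remains correct.
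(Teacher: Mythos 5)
Your argument is correct, but there is nothing in the paper to compare it against: \Cref{ideal-factor} is quoted as a known result, with references to \cite[Theorem 2.5]{MM} and \cite[Proposition 5.23]{bresar08}, and no proof is given in the text. What you propose is a clean, self-contained derivation modulo Herstein's dichotomy (every Lie ideal of a simple ring of characteristic $\neq 2$ is either contained in the centre or contains $[R,R]$), which applies here because a simple unital $C^*$-algebra is algebraically simple with centre $\C 1$ --- both points you justify correctly, and both are consistent with remarks the paper itself makes in \Cref{simple} and in the proof of \Cref{cor-simple-lie}. The two halves then reduce exactly as you say: Pop's theorem (\Cref{cp}(2)) gives the purely algebraic identity $[A,A]=A$ when $\mcal{T}(A)=\emptyset$, which is why part (1) classifies \emph{all} Lie ideals, whereas in part (2) only the Cuntz--Pedersen density $\overline{[A,A]}=sl(A)$ is available, so closedness of $L$ is genuinely needed to upgrade $[A,A]\subseteq L$ to $sl(A)\subseteq L$ --- a distinction you flag appropriately. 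Compared with the cited sources, which obtain the result inside broader frameworks (unitarily-invariant subspaces in \cite{MM}, the ``embraced by an ideal'' machinery in \cite{bresar08}), your route through Herstein plus \Cref{cp} is more elementary and entirely in the spirit of the paper, which already invokes Herstein \cite{Her} elsewhere. The one point worth stating explicitly is the exact form of Herstein's theorem you use: the dichotomy for Lie ideals of $R$ itself holds with no exception on $\dim_{Z(R)}R$ (the low-dimensional exceptions in Herstein's work concern Lie ideals of $[R,R]$), so no case needs to be excluded.
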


 \begin{cor}\label{cor-simple-lie}
If $M$ is a $II_1$-factor or an $I_n$-factor, then the only (uniformly)
closed Lie ideals of $M$ are $\{0\}$, $\mathbb{C} 1$, $\text{sl}(M)$
and $M$.
\end{cor}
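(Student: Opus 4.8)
The plan is to read the statement off \Cref{ideal-factor}(2): it suffices to check that a $II_1$-factor and an $I_n$-factor are each a \emph{simple unital} $C^*$-algebra admitting a \emph{unique tracial state}, after which the four closed Lie ideals $\{0\}$, $\C 1$, $\text{sl}(M)$ and $M$ are exactly those produced by that proposition. Since every von Neumann algebra is unital, the only points needing attention are simplicity in the $C^*$-sense (absence of non-trivial norm-closed two-sided ideals) and uniqueness of the tracial state. For an $I_n$-factor, i.e. $M \cong \M_n(\C)$, both are immediate: $\M_n(\C)$ is algebraically simple and carries the normalized trace as its only tracial functional up to scaling, so \Cref{ideal-factor}(2) applies verbatim.

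The substance lies in the $II_1$-case, so let $M$ be a $II_1$-factor with normal tracial state $\tau$. First I would record that $\tau$ is the \emph{only} tracial state on $M$: any tracial state $\varphi$ is constant on Murray--von Neumann equivalence classes of projections (as $\varphi(u^*u)=\varphi(uu^*)$) and additive on orthogonal ones, so by the comparison theory of the finite factor $M$ the value $\varphi(p)$ depends only on $\tau(p)$; normalising at $1$ forces $\varphi=\tau$ on projections, and hence, using norm-continuity of states together with the norm-density of the span of spectral projections in the self-adjoint part of a von Neumann algebra, $\varphi=\tau$ throughout. Next I would prove simplicity. Given a non-zero norm-closed two-sided ideal $I \seq M$, choose $0\neq a\in I$ with $a\geq 0$ and fix $0<\epsilon<\|a\|$; the Borel functional calculus inside $M$ yields $g(a)\in M$ (with $g(t)=t^{-1}$ for $t\geq\epsilon$ and $g(t)=0$ otherwise) such that $g(a)\,a = \chi_{[\epsilon,\infty)}(a)=:e$ is a non-zero projection, so $e\in I$. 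Writing $c=\tau(e)>0$ and picking $n$ with $1/n<c$, the $II_1$-factor decomposes $1=\sum_{i=1}^{n}p_i$ into mutually orthogonal projections with $\tau(p_i)=1/n<c$; by comparison each $p_i\precsim e$, say $p_i=u_i^*q_iu_i$ with $q_i\leq e$, whence $q_i=eq_ie\in I$ and $p_i\in I$, giving $1=\sum_{i=1}^{n}p_i\in I$ and thus $I=M$. Therefore $M$ is a simple unital $C^*$-algebra with a unique tracial state, and \Cref{ideal-factor}(2) delivers the claimed list.

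The main obstacle is precisely this verification for the $II_1$-factor: establishing $C^*$-algebraic simplicity and the uniqueness of the tracial state (both genuinely invoking the comparison theory of finite factors) is where the content sits, while the $I_n$-case and the final deduction are formal consequences of \Cref{ideal-factor}.
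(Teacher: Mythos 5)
Your proposal is correct and follows exactly the paper's route: reduce to \Cref{ideal-factor}(2) by checking that a $II_1$- or $I_n$-factor is a simple unital $C^*$-algebra with a unique tracial state. The only difference is that the paper simply cites Blackadar (III.1.7.11 and III.2.5.7 of \cite{bd}) for those two facts, whereas you prove them from scratch via comparison theory; your arguments are sound, just more detailed than the paper needs.
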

 
\begin{proof}
 A $II_1$-factor or an $I_n$-factor is algebraically simple because it
 is a simple unital $C^*$-algebra - see
 \cite[III.1.7.11]{bd}. Moreover, it has a unique tracial state - see \cite[III.2.5.7]{bd}.
\end{proof}

 Fong, Miers and Sourour (\cite[Theorem 1]{fong}) and Marcoux
 (\cite[Theorem 2.12]{Mar95}) characterized closed Lie ideals of
 $B(H)$ and of a UHF $C^*$-algebra, respectively, through invariance
 under unitary conjugation. Note that $B(H)$ admits no tracial states
 and a UHF $C^*$-algebra admits a unique tracial state and both are
 spanned by their projections (\cite{Mar10}, \cite[Theorem 4.6]{MM}).
 Imitating the original proofs, we obtain the following generalization
 of above characterization.

\begin{prop}\label{invariance}
  Let $A$ be a simple unital $C^*$-algebra with at most one
  tracial state and suppose it contains a non-trivial
 projection. Let $L$ be a closed subspace of $A$. Then the following
 are equivalent:
 \begin{enumerate} 
  \item $L$ is a Lie ideal.
  \item $s^{-1} Ls \subseteq L$ for all invertible elements $s$ in $A$.
  \item $u^*Lu\subseteq L$ for all unitaries $u$ in $A$.
 \end{enumerate}  
\end{prop}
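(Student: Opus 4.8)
The plan is to prove the cycle $(1)\Rightarrow(2)\Rightarrow(3)\Rightarrow(1)$. The implication $(2)\Rightarrow(3)$ needs no argument: a unitary $u$ is invertible with $u^{-1}=u^*$, so $u^*Lu=u^{-1}Lu\subseteq L$ is merely the special case of $(2)$ with $s=u$.

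For $(3)\Rightarrow(1)$ I would run the classical one-parameter-group argument, which uses only that $L$ is a closed subspace and no further hypothesis on $A$. Fix $x\in L$ and a self-adjoint $a\in A$ and consider the unitaries $u_t=\exp(ita)$. By $(3)$, $f(t):=u_t^*xu_t=\exp(-ita)\,x\,\exp(ita)\in L$ for every $t\in\mathbb{R}$. As $L$ is a linear subspace, every difference quotient $t^{-1}\big(f(t)-f(0)\big)$ lies in $L$, and as $L$ is norm-closed the derivative $f'(0)=i[x,a]$ lies in $L$ as well; hence $[a,x]\in L$ for every self-adjoint $a$. Splitting a general $a\in A$ as $a_1+ia_2$ with $a_1,a_2$ self-adjoint and using bilinearity of the bracket then gives $[a,x]\in L$ for all $a\in A$, so $L$ is a Lie ideal. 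The only points requiring care are the norm-differentiability of $f$ and the closedness of $L$, and neither is an obstacle.

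The substantive step is $(1)\Rightarrow(2)$, and here I would simply quote the classification in \Cref{ideal-factor}. Since $A$ is simple, unital and carries at most one tracial state, a closed Lie ideal $L$ must be one of $\{0\}$, $\mathbb{C}1$, $\mathrm{sl}(A)$ (the last occurring only in the unique-trace case) or $A$, so it suffices to verify that each of these four subspaces is invariant under $\mathrm{Ad}_s\colon x\mapsto s^{-1}xs$ for every invertible $s$. This is clear for $\{0\}$ and $A$; for $\mathbb{C}1$ one has $s^{-1}(\lambda1)s=\lambda1$; and for $\mathrm{sl}(A)=\ker\tau$, with $\tau$ the unique tracial state, the trace identity $\tau(s^{-1}xs)=\tau\big(s\,(s^{-1}x)\big)=\tau(x)$ shows that $\ker\tau$ is $\mathrm{Ad}_s$-invariant. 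This closes the cycle.

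I expect a genuine obstacle to surface only if one avoids \Cref{ideal-factor} and seeks a self-contained proof imitating Fong--Miers--Sourour and Marcoux. For $x\in L$ and $s=\exp(b)$ one can expand $s^{-1}xs=\sum_{n\ge0}\frac{(-1)^n}{n!}(\mathrm{ad}_b)^n(x)$ as a norm-convergent series each of whose terms is an iterated commutator, hence lies in the closed Lie ideal $L$; this yields invariance under the identity component of the invertible group, and, after the polar reduction $s=u|s|$ with $|s|=\exp(\log|s|)$ already lying in that component, reduces matters to unitaries. The hard part is then to reach unitaries outside the identity component of $U(A)$, which need not be connected at this level of generality — the original settings $B(H)$ and UHF algebras being special precisely in that their unitary groups are connected. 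This is exactly the gap that invoking \Cref{ideal-factor} sidesteps, and it is on this direct route that the non-trivial-projection hypothesis (through Pedersen's density of the span of projections) would be brought to bear.
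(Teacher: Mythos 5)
Your proposal is correct, and it reaches the non-trivial implication $(1)\Rightarrow(2)$ exactly as the paper does, by reading off the invariance of each of $\{0\}$, $\C 1$, $sl(A)$ and $A$ under similarity from the classification in \Cref{ideal-factor}; the implication $(2)\Rightarrow(3)$ is the same triviality in both. Where you genuinely diverge is $(3)\Rightarrow(1)$: the paper conjugates by the unitaries $u=p+i(1-p)$ to get $[p,l]=\frac{1}{2i}(u^*lu-ulu^*)\in L$ for every projection $p$, and then invokes Pedersen's theorem \cite{Ped80} that the projections span a dense subspace of $A$ --- this is precisely where the hypothesis of a non-trivial projection (together with simplicity and the condition on tracial states) enters. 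You instead differentiate $t\mapsto e^{-ita}xe^{ita}$ at $t=0$ along one-parameter unitary groups, which is a standard and perfectly valid argument; it uses only that $L$ is a norm-closed subspace of a unital $C^*$-algebra and in particular shows that for this implication the projection hypothesis is superfluous (it is the converse direction, a Lie ideal being unitarily invariant, that is the genuinely hard one, which is why both you and the paper route it through \Cref{ideal-factor}). Your closing remarks about the exponential-series route and the non-connectedness of the unitary group concern a third, self-contained strategy and do not affect the validity of the proof you actually give.
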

\begin{proof}
By \Cref{ideal-factor}, $(1) \Rightarrow (2)$ is a straight forward
verification on the possible list of closed Lie ideals. The
implication $ (2)\Rightarrow (3)$ is obvious.

  In order to show $(3) \Rightarrow (1)$, note that for every
  projection $p \in A$, $u := p+ i(1-p)$ is a unitary and for $l \in
  L$,
  $$ [p,l] = \frac{(u^*lu-ulu^*)}{2i} \in L.  $$ Then, since $A$ is a
  simple unital $C^*$-algebra with either no tracial states or a
  unique tracial state and contains a non-trivial projection, the
  projections span a dense subspace of $A$ (\cite[Corollary 7]{Ped80}),
  and we are done.
\end{proof}

We now show that the analogue of \Cref{invariance} does not hold in
Banach algebras. Recall (from \cite{bresar08}) that {\em a tracial
  functional} on a Banach algebra $A$ is a non-zero continuous linear
functional $\varphi$ satisfying $\varphi (a b) =\varphi (ba)$ for all
$a, b \in A$. The collection of tracial functionals on $A$ is denoted
by $\mcal{TF}(A)$. By Hahn-Banach Theorem, we easily see that
$\overline{[A, A]} = A$ if and only if $\mcal{TF}(A) = \emptyset$.

In a unital Banach algebra $A$, the set of its unitaries is defined as
$U(A) = \{ u \in GL(A): \| u \| = 1 = \| u^{-1}\| \}$.  If $A$ is a
unital $C^*$-algebra, then clearly $\{u \in A: u u^* = 1 = u^* u \}
\subseteq U(A)$ and for $u \in U(A)$, considering $A \subseteq B(H)$
for some Hilbert space $H$, we see that $\|\xi \| = \| u^{-1} u
(\xi)\| \leq \| u(\xi)\| \leq \|\xi\|$ for all $\xi \in H$, so that
$u$ is an isometry. In particular, it follows that for a unital
$C^*$-algebra $A$, both  definitions give the same set, i.e., $U(A) = \{u
\in A: u u^* = 1 = u^* u \} .$

\begin{rem}\label{invariance-failure}
For any two unital $C^*$-algebras $A$ and $B$, it is known
(\cite[Corollary 2]{jk08}) that $U(A \oh B) = \{ u \ot v: u \in U(A),
v \in U(B)\}$, where $\oh$ is the Haagerup tensor product (see
\cite{ERbook}). By a result of Fack (see \cite[Theorem 2.16]{Mar10}),
the Cuntz algebra $\mcal{O}_2$ is spanned by its commutators and,
therefore, it has no tracial functionals. Further, since $\|\cdot\|_h$
is cross norm (see \cite{ERbook}), $\mcal{O}_2 \ni x \ra x \ot 1 \in
\mcal{O}_2 \oh \mcal{O}_2$ is an isometric homomorphism, so the Banach
algebra $\mcal{O}_2 \oh \mcal{O}_2$ does not have any tracial
functionals, as well. Also, since $\mcal{O}_2$ is a simple
$C^*$-algebra, $\mcal{O}_2 \oh \mcal{O}_2$ is a topologically simple
Banach algebra, by \cite[Theorem 5.1]{ass}.  By above decomposition of
unitaries, $\mcal{O}_2 \ot \C1$ is invariant under conjugation by
unitaries in $\mcal{O}_2 \oh \mcal{O}_2$ but it is easily seen that it
is not a Lie-ideal.
\end{rem}

 On similar lines, for any infinite dimensional Hilbert space $H$, it
 can also be seen that $K(H) \ot \C 1$ is invariant under conjugation
 by unitaries in $B(H) \oh B(H)$ but is not a Lie ideal. These
 observations also illustrate that tensor product of two Lie ideals need not be a
 Lie ideal.

\begin{rem}
Unlike the above decomposition of unitaries in the Banach algebra
$\mcal{O}_2 \oh \mcal{O}_2$, the unitaries in the $C^*$-algebra
$\mcal{O}_2 \omin \mcal{O}_2$ do not decompose as elementary
tensors. Indeed, since $\mcal{O}_2 \omin \mcal{O}_2$ is a simple (see
\cite{Tak}), unital $C^*$-algebra and has no tracial states, by
\cite[Proposition 5.23]{bresar08} or \Cref{A-B-simple} below, its only
closed Lie ideals are $\{0\}$, $\C (1 \ot 1)$ and itself. Since
$\mcal{O}_2$ contains non-trivial projections, so does $\mcal{O}_2
\omin \mcal{O}_2$; therefore, by \Cref{invariance}, $\mcal{O}_2 \ot \C
1$ is not invariant under conjugation by unitaries. In particular, not
every unitary in $\mcal{O}_2 \omin \mcal{O}_2 $ can be expressed as an
elementary tensor $u \ot v$ for unitaries $u$ and $v$ in $\mcal{O}_2$.
\end{rem}

\section{Closed Ideals of $A \omin B$}\label{ideal-structure}

Let $A$ and $B$ be $C^*$-algebras and suppose $A$ is topologically
simple. If $\alpha$ is either the Haagerup tensor product or the
operator space projective tensor product, then by \cite[Proposition
  5.2]{ass}, and by \cite[Theorem 3.8]{JK-edin}, it is known that
every closed ideal of the Banach algebra $A \ot^\alpha B$ is a product
ideal of the form $ A \ot^\alpha J$ for some closed ideal $J$ in $B$.

In general, not much is known about the ideal structure of the
$C^*$-minimal tensor product. However, the (Zorn's Lemma) technique
used in above ideal structures can be applied to analyze the ideals of
$A\omin B$ under some additional hypothesis, which we demonstrate
below.

\begin{thm}\label{simple-ideal}
 Let $A$ and $B$ be $C^*$-algebras where $A$ is topologically
 simple. If either $A$ is exact or $B$ is nuclear, then every closed
 ideal of the $C^*$-algebra $A \omin B$ is a product ideal of the form
 $ A \omin J$ for some closed ideal $J$ in $B$.
\end{thm}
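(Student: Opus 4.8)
The plan is to run the Zorn's Lemma / slice argument of \cite[Proposition 5.2]{ass} and \cite[Theorem 3.8]{JK-edin}, the only genuinely analytic input being the way the minimal tensor product interacts with quotients, which is exactly what ``$A$ exact or $B$ nuclear'' controls. Concretely, the single external fact I would quote is that for every closed ideal $J$ of $B$ the sequence $0 \ra A \omin J \ra A \omin B \xrightarrow{\mathrm{id}_A \ot \pi} A \omin (B/J) \ra 0$ is exact, where $\pi \colon B \ra B/J$ is the quotient map; equivalently $\ker(\mathrm{id}_A \ot \pi) = A \omin J$ and $(A \omin B)/(A \omin J) \cong A \omin (B/J)$. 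This is the defining property of exactness when $A$ is exact, and is automatic when $B$ (hence $J$ and $B/J$) is nuclear. For $\phi \in A^*$ I will also use the right slice map $R_\phi \colon A \omin B \ra B$, $R_\phi(a \ot b) = \phi(a) b$, together with the elementary facts that $R_\phi$ is norm-decreasing, that $\pi \circ R_\phi^{B} = R_\phi^{B/J} \circ (\mathrm{id}_A \ot \pi)$, and that the slice maps jointly separate the points of any minimal tensor product.

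Let $I$ be a non-zero closed ideal of $A \omin B$ and set $J = \{ b \in B : a \ot b \in I \text{ for all } a \in A \}$. Using that $\|\cdot\|_{\min}$ is a cross norm, and absorbing one-sided multipliers through an approximate identity of $B$, one checks routinely that $J$ is a closed two-sided ideal of $B$. Since every elementary tensor $a \ot b$ with $a \in A$ and $b \in J$ lies in $I$ and such tensors span a dense subspace of $A \omin J$, the inclusion $A \omin J \seq I$ is immediate. All the work is in the reverse inclusion $I \seq A \omin J$.

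For the reverse inclusion I would first reduce it to the statement that every slice of an element of $I$ lands in $J$, i.e.\ $R_\phi(x) \in J$ for all $x \in I$ and all $\phi \in A^*$. Granting this, fix $x \in I$ and push it into the quotient: by the intertwining relation above, the slices of $(\mathrm{id}_A \ot \pi)(x) \in A \omin (B/J)$ are $\pi(R_\phi(x)) = 0$ for every $\phi$, so, since slice maps separate points, $(\mathrm{id}_A \ot \pi)(x) = 0$. By exactness this says exactly $x \in \ker(\mathrm{id}_A \ot \pi) = A \omin J$, which is what we want. (Equivalently, one may phrase the same computation as a Zorn's Lemma maximality argument producing the largest $J$ with $A \omin J \seq I$ and then using the quotient to rule out properness, as in \cite{ass, JK-edin}.)

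The remaining claim $R_\phi(x) \in J$, unwound through the definition of $J$, asks that $a \ot R_\phi(x) \in I$ for every $a \in A$, and this is where I expect the \textbf{main obstacle} and where the topological simplicity of $A$ is consumed. My plan is to realise the rank-one map $\theta \colon A \ra A$, $\theta(y) = \phi(y)\, a$, as a point-norm limit of elementary operators $y \mapsto \sum_i c_i\, y\, d_i$ with $c_i, d_i \in A$; for then the completely bounded maps $x \mapsto \sum_i (c_i \ot 1)\, x\, (d_i \ot 1)$ carry the ideal $I$ into itself and converge, in point norm on $A \omin B$, to $(\theta \ot \mathrm{id})(x) = a \ot R_\phi(x)$, which therefore lies in the closed set $I$. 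The existence of such an approximation is precisely a simplicity phenomenon: because $A$ is topologically simple the closed ideal generated by any non-zero element is all of $A$, and a Kadison-transitivity argument on the finite-dimensional subspace of $A$ spanned by the left slices of $x$ yields the required $c_i, d_i$ with control on their norms. I expect the honest difficulties to be concentrated here and in the attendant bookkeeping for non-unital $A$ and $B$ -- interpreting $(c_i \ot 1)\, x\, (d_i \ot 1)$ and the multiplier manipulations via unitizations and approximate identities -- which I would handle exactly as in \cite{ass, JK-edin}.
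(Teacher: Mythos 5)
Your reduction of the theorem to the single claim ``$R_\phi(x)\in J$ for all $x\in I$ and $\phi\in A^*$'' is sound, and the way you then finish --- intertwining slice maps with $\mathrm{id}_A\ot\pi$, using that product functionals separate points of $A\omin(B/J)$, and invoking exactness (or nuclearity of $J$ and $B/J$) to identify $\ker(\mathrm{id}_A\ot\pi)$ with $A\omin J$ --- is exactly the analytic content of the paper's proof. The gap is entirely in the step you yourself flag as the main obstacle, and it is a genuine one, not bookkeeping. You assert that for a topologically simple $A$ the rank-one map $\theta=\phi(\cdot)a$ is a point-norm limit of elementary operators $y\mapsto\sum_i c_i y d_i$ with controlled norms, ``by Kadison transitivity on the finite-dimensional subspace of $A$ spanned by the left slices of $x$.'' First, the left slices of a general $x\in A\omin B$ span an infinite-dimensional subspace; you would have to pass to a finite-sum approximant of $x$, at which point a uniform bound on the \emph{cb-norms} of the approximating elementary operators becomes indispensable for the $3\epsilon$ argument, and no source for such a bound is given. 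Second, and more seriously, Kadison transitivity controls $\pi(c)$ on finite-dimensional subspaces of the Hilbert space of an \emph{irreducible representation}; it does not control the global norm $\|\sum_i c_i y d_i-\phi(y)a\|$ in $A$, and it does not apply at all to an arbitrary $\phi\in A^*$, which need not be a matrix coefficient of any irreducible representation. The assertion that every rank-one map on a simple $C^*$-algebra is a bounded point-norm limit of elementary operators is far stronger than anything the theorem needs, and I see no reason it holds in this generality (try to convince yourself it holds for $\tau(\cdot)1$ on a simple $C^*$-algebra with trace that is not approximately finite-dimensional).

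The paper avoids this entirely: it quotes \cite[Proposition 4.5]{ass} --- every non-zero closed ideal of $A\omin B$ contains a non-zero elementary tensor $a\ot b$ --- and uses simplicity only through the elementary fact that $\ol{\mathrm{span}}\,AaA=A$ for $a\neq 0$, which upgrades $a\ot b\in I$ to $A\omin \ol{\mathrm{Id}(b)}\seq I$. Combined with a Zorn maximality argument and the same quotient-plus-exactness step you use (applied to the image ideal in $A\omin(B/J)$ to rule out properness), this closes the proof. To repair your write-up you should either import that citation, or prove the elementary-tensor statement directly; the standard tool for the latter is excision of pure states (or the specific compression argument of \cite{ass}), which produces $(e\ot 1)x(e\ot 1)$ close to $e^2\ot R_\phi(x)$ for excising elements $e$ --- note that even there one must handle the fact that $e$ moves with the error tolerance, which is precisely the difficulty your point-norm limit of a \emph{fixed} rank-one map would sweep away if it were available.
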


\begin{proof}
Let $I$ be a non-zero closed ideal in $A \omin B$.  Consider the
collection $$ \mcal{F} := \{ J \subseteq B: J
\ \mathrm{is\ a\ closed\ ideal\ in}\ B\ \mathrm{and}\ A \omin J
\subseteq I\}.$$ By \cite[Proposition 4.5]{ass}, $I$ contains a
non-zero elementary tensor, say, $a \ot b$. If $K$ and $J$ are the
non-zero closed ideals in $A$ and $B$ generated by $a$ and $b$,
respectively, then by simplicity of $A$, we have $K = A$ and $A \omin
J \subseteq I$. In particular, $\mcal{F} \neq \emptyset$.

Note that, by injectivity of $\omin$ and the fact that a finite sum of
closed ideals is closed in a $C^*$-algebra, it is easily seen that $A
\omin (\sum_i J_i) = \sum_i (A \omin J_i)$ for any finite collection
of closed ideals $\{J_i\}$ in $B$.  So, with respect to the partial
order given by set inclusion, every chain $\{ J_i : i \in \Lambda\}$
in $\mcal{F}$ has an upper bound, namely, the closure of the ideal
$\{\sum_{\mathrm{finite}} x_i : x_i \in J_i\}$ in $\mcal{F}$, implying
thereby that there exists a maximal element, say $J$, in $\mcal{F}$.

We will show that $A \omin J = I$. Consider the map $\mathrm{Id} \omin
\pi : A \omin B \ra A \omin (B / J)$. If $A$ is exact, then by
definition of exactness, its kernel is $A \omin J$; and, if $B$ is
nuclear, then so are $J$ and $B/J$ and it is known (see \cite{bd, guichardet})
that the sequence
$$0 \ra A \omax J \ra A \omax B \ra A \omax (B/J) \ra 0$$ is always
exact and, therefore, we obtain $$\ker(\mathrm{Id}\omin \pi) =
\ker(\mathrm{Id}\omax \pi) = A \omax J = A \omin J.$$ Since
$\mathrm{Id} \omin \pi$ is a surjective $*$-homomorphism,
$\widetilde{I}:=(\mathrm{Id} \omin \pi)(I)$ is a closed ideal in $A
\omin (B / J)$. It is now sufficient to show that this is the zero
ideal. If $\widetilde{I} \neq 0$, then, again by \cite[Proposition
  4.5]{ass}, $\widetilde{I}$ contains a non-zero elementary tensor,
say, $a \ot (b +J)$. Let $K$ be the closed ideal in $B$ generated by
$b$. Since $A$ is simple, it equals the closed ideal generated by $a$
and we obtain $A \omin K \subseteq I$, a contradiction to the maximality
of $J$ as $A \omin K$ is not contained in $A \omin J$.
\end{proof}

 \section{Ideals with quasi-central approximate identities and their closed commutators}\label{q-c-a-i}
 
We first recall some definitions and notations from \cite{Mar95,
  bresar08}.  Every subspace of $Z(A)$, the center of an associative
algebra $A$, is clearly a Lie ideal in $A$ and is called a {\em
  central Lie ideal}.  For subspaces $X$ and $Y$ of $A$, $$[X,Y]:=
\text{span} \{ [x,y]: x\in X, y \in Y\}\ \text{ and } XY:= \text{span}
\{ xy: x \in X, y \in Y\}.$$ If $L$ and $M$ are Lie ideals in $A$ then
so is $[L, M]$. For a subspace $S$ of  $A$, consider the
subspace $$N(S) : = \{x \in A: [x, a] \in S \ \mathrm{for\ all}\ a \in
A \}.$$ If $L$ is a Lie ideal then $N(L)$ is a subalgebra as well as a
Lie ideal of $A$ (\cite[Proposition 2.2]{bresar08}). Note that if $I$
is an ideal in $A$, then any subspace $L$ of $A$ {\em embraced by
  $I$}, i.e., satisfying $[I, A] \subseteq L \subseteq N([I, A])$, is
a Lie ideal in $A$. In fact, Bre\v{s}ar et al.~ \cite[$\S
  5$]{bresar08} showed that a closed subspace $L$ of a $C^*$-algebra
$A$ is a Lie ideal if and only if it is {\em topologically embraced}
by a closed ideal $I$ in $A$, i.e.,
 $$\ol{[I, A]} \subseteq L \subseteq N(\ol{[I, A]}).$$ We show below
(see \Cref{ideal-lie-ideal}) that this characterization generalizes to
Banach algebras whose every closed ideal possesses a {\em
  quasi-central approximate identity}. Examples of such Banach
algebras (which are not $C^*$-algebras) will be illustrated in
\Cref{comm-section}.

For a closed ideal $I$ in a $C^*$-algebra $A$, it
is known (\cite[Lemma 1]{Miers} and \cite[Proposition 5.25]{bresar08})
that
$$\overline{[I, I]}= \ol{[[I, A], A]} = \overline{[I, A]} = I\, \cap\,
\overline {[A, A]}.$$ Miers (in \cite{Miers}) mentions that the third
equality was due to Bunce and gives a proof using {\em quasi-central
  approximate identity}, and the other two equalities were proved by
Bre\v{s}ar et al.~using techniques of von Neumann algebras.  We
generalize this result to ideals in Banach algebras with {
  quasi-central approximate identities. The proof given here borrows
  ideas from \cite{Miers, Rob} and does not involve von Neumann
  algebra tools. 
  \vspace*{1mm}

\begin{defn}\cite{ass}  If $I$ is an ideal in a
Banach algebra $A$, then a net $\{e _\lambda\}$ in $I$ is said to be a
{quasi-central approximate identity for $I$ in $A$} if
 \begin{enumerate} 
\item $\sup_\lambda \| e _\lambda\| < \infty$, and 
\item  $ \lim_\lambda \| x e_\lambda
  - x \| = \lim_\lambda \| e_\lambda x - x \| = \lim_\lambda
  \|e_\lambda a - a e_\lambda \| = 0$  for all $x \in I$ and $a \in A$.
 \end{enumerate}
\end{defn}

 It is known that all ideals (not necessarily closed) in
 $C^*$-algebras possess quasi-central approximate identities
 (\cite[Theorem 3.2]{AP} and \cite[Theorem 1]{arveson}).

 The following equalities between commutators of ideals will be
 required ahead in a characterization of closed Lie ideals (see
 \cite[Lemma 1]{Miers}, \cite[Proposition 5.25]{bresar08} and
 \cite[Lemma 1.4]{Rob} for ideals in $C^*$-algebras).

 \begin{lemma}\label{commutator}
Let $I$ be a closed ideal in a Banach algebra $A$. If $I$ admits a
quasi-central approximate identity in $A$,
then \begin{equation}\label{comm} \ol{[I, I]} = \ol{[I, A]} = I\,
  \cap\, \overline {[A, A]}.\end{equation} In particular, $N(\ol{[I,
    A]}) = N(I)$ and, if $A$ has no tracial functionals, then $\ol{[I,
    A]} = I$.  Moreover, if the closed ideal $J := \ol{\mathrm{Id}[I,A]}$
also contains a quasi-central approximate identity, then $\ol{[I, A]} =
\ol{[J, A]}$.
\end{lemma}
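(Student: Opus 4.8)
The plan is to establish the chain of equalities in \eqref{comm} by exploiting the quasi-central approximate identity $\{e_\lambda\}$ to show that every element of $I \cap \ol{[A,A]}$ can be approximated by commutators drawn from $I$, and that conversely commutators of the form $[I,A]$ land inside $I \cap \ol{[A,A]}$. The two easy inclusions come first: since $I$ is an ideal, $[I,A] \subseteq I$, and trivially $[I,A] \subseteq [A,A]$, so $\ol{[I,A]} \subseteq I \cap \ol{[A,A]}$; and the inclusion $\ol{[I,I]} \subseteq \ol{[I,A]}$ is immediate from $I \subseteq A$. The substance is therefore in proving $I \cap \ol{[A,A]} \subseteq \ol{[I,I]}$, which will close the loop.

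For that key inclusion, first I would reduce to showing that any finite sum of commutators $\sum_k [a_k, b_k]$ that happens to lie in $I$ is approximated in norm by elements of $[I,I]$. Given such an element $x \in I \cap [A,A]$, the idea borrowed from Miers and Robert is to insert the approximate identity: write $x \approx e_\lambda x$ (using property (1) of the definition) and then manipulate $e_\lambda x = e_\lambda \sum_k [a_k,b_k] = \sum_k \big( [e_\lambda a_k, b_k] - [e_\lambda, b_k] a_k \big)$. The quasi-centrality condition $\|e_\lambda a - a e_\lambda\| \to 0$ forces the second batch of terms $[e_\lambda, b_k] a_k$ to vanish in the limit, while each $[e_\lambda a_k, b_k]$ is a commutator with $e_\lambda a_k \in I$. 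A further approximation replacing $b_k$ by $e_\mu b_k$ (or otherwise pulling the second factor into $I$) upgrades these to genuine elements of $[I,I]$, and passing to the limit in $\lambda$ (and $\mu$) yields $x \in \ol{[I,I]}$. The boundedness in property (1) is what guarantees the cross terms stay controlled during these manipulations, so I expect the main obstacle to be organizing the double-limit bookkeeping cleanly so that each error term is shown to be $o(1)$ uniformly enough to conclude.

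Once \eqref{comm} is in hand, the three stated consequences follow quickly. The identity $N(\ol{[I,A]}) = N(I)$ should follow because $N(\cdot)$ depends only on the commutator structure: from $\ol{[I,A]} = I \cap \ol{[A,A]}$ one checks that $[x, \cdot]$ maps $A$ into $\ol{[I,A]}$ precisely when it maps $A$ into $I$, since the range of any inner derivation already sits in $\ol{[A,A]}$; I would verify both containments directly. The claim that $A$ having no tracial functionals forces $\ol{[I,A]} = I$ follows by combining \eqref{comm} with the earlier observation that $\mcal{TF}(A) = \emptyset$ is equivalent to $\ol{[A,A]} = A$, whence $I \cap \ol{[A,A]} = I \cap A = I$.

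The final ``moreover'' clause, with $J := \ol{\mathrm{Id}[I,A]}$ (the closed ideal generated by $[I,A]$) also admitting a quasi-central approximate identity, asserts $\ol{[I,A]} = \ol{[J,A]}$. Here I would argue both inclusions using \eqref{comm} applied twice. Since $[I,A] \subseteq J$, we get $\ol{[I,A]} \subseteq \ol{[J,A]}$ at once. For the reverse, applying \eqref{comm} to the ideal $J$ gives $\ol{[J,A]} = J \cap \ol{[A,A]}$, so it suffices to show $J \cap \ol{[A,A]} \subseteq \ol{[I,A]}$; since $J$ is the closed ideal generated by $\ol{[I,A]} = I \cap \ol{[A,A]}$, elements of $J$ are limits of finite sums $\sum a_j c_j b_j$ with $c_j \in \ol{[I,A]} \subseteq I$, so such products already lie in $I$, giving $J \subseteq \ol{I} = I$ and hence $J \cap \ol{[A,A]} \subseteq I \cap \ol{[A,A]} = \ol{[I,A]}$. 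The delicate point to double-check is that $J$, being generated by commutators, does inherit the approximate-identity hypothesis in the manner required; but since that is assumed in the statement, the argument reduces to the containment juggling just described.
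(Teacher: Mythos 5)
Your proof is correct, and for the central chain of equalities it follows essentially the same route as the paper: both arguments insert the quasi-central approximate identity into one slot of a commutator, using the algebraic identity $e_\lambda\sum_k[a_k,b_k]=\sum_k[e_\lambda a_k,b_k]-\sum_k[e_\lambda,b_k]a_k$ so that quasi-centrality kills the error terms while boundedness of the net carries the $\epsilon$-approximation of a general element of $I\cap\ol{[A,A]}$ through the computation. The paper organizes this as two separate density statements ($I\cap\ol{[A,A]}\subseteq\ol{[I,A]}$, then $[I,I]$ dense in $\ol{[I,A]}$) rather than your single double limit; also, your stated ``reduction'' to exact finite sums of commutators lying in $I$ is not literally available (a general element of $I\cap\ol{[A,A]}$ is only a limit of such sums, which need not lie in $I$), but the $\epsilon/3$-bookkeeping you sketch afterwards handles the general case, exactly as in the paper. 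The consequences concerning $N(\cdot)$ and tracial functionals are handled identically. Where you genuinely diverge is the final ``moreover'' clause: the paper proves the nontrivial inclusion $\ol{[I,A]}\subseteq\ol{[J,A]}$ by a second direct approximation, replacing $[x,a]$ by $[f_\mu x,a]$ where $\{f_\mu\}$ is the quasi-central approximate identity of $J$, whereas you deduce everything formally from \eqref{comm} applied to both $I$ and $J$ together with $[I,A]\subseteq J\subseteq I$, namely $\ol{[I,A]}\subseteq J\cap\ol{[A,A]}=\ol{[J,A]}$ and $\ol{[J,A]}=J\cap\ol{[A,A]}\subseteq I\cap\ol{[A,A]}=\ol{[I,A]}$. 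Your route is shorter and uses the hypothesis on $J$ only through the already-established identity \eqref{comm}, at the cost that your ``at once'' justification of $\ol{[I,A]}\subseteq\ol{[J,A]}$ really does require \eqref{comm} for $J$ and not merely $[I,A]\subseteq J$ --- but since you invoke that anyway, the argument stands.
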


\begin{proof}
Let $\{e_\lambda\}$ be a quasi-central approximate identity for the
ideal $I$ in $A$.  Since $I$ is a closed ideal, clearly $$\ol{[I, I]}
 \subseteq \overline{[I, A]} \subseteq I\,
  \cap\, \overline {[A, A]}.$$ For the reverse inclusions, we first
  show that $I \cap \ol{[A, A]} \subseteq \ol{[I, A]}$.  Let $z \in I
  \cap \ol{[A, A]}$ and $\epsilon > 0$. Then, there exist $x_i, y_i
  \in A$, $ 1 \leq i \leq n$ such that $\| z - \sum_i [x_i, y_i]\| <
  \epsilon/3$.  Note that $\sum_i [x_i e_\lambda, y_i] \in [I, A]$ for all $\lambda$, and
 $$\begin{array}{ccl} \| z - \sum_i [x_i e_\lambda, y_i]\|  & \leq & \| z
   - z e_\lambda \|  + \| z e_\lambda + \sum_i x_i [e_\lambda, y_i ] - \sum_i
   [x_i e_\lambda, y_i]\|  + \| \sum_i x_i[e_\lambda, y_i]\|  \\ & = & \| z
   - z e_\lambda \|  + \| z e_\lambda - \sum_i[x_i, y_i] e_\lambda \|  +
   \sum_i\|x_i\|  \| e_\lambda y_i - y_i e_\lambda \|. 
  \end{array} $$
Thus, there exists an index $\lambda_0$ such that $\| z - \sum_i [x_i e_{\lambda_0},
  y_i]\|  < \epsilon$ implying that $ z \in \ol{[I,A]}$.

For the remaining equality, it suffices to show that $[I, I]$ is
dense in $\ol{[I, A]}$. Let $w \in \ol{[I, A]}$ and $\epsilon >
0$. Then there exist $u_i \in I, a_i \in A $, $ 1 \leq i \leq n$ such
that $\| w - \sum_i [u_i, a_i]\| < \epsilon/3$. Clearly, $\sum_i [u_i
  , e_\lambda a_i] \in [I, I]$ and, as above, it is easily seen that
$$ \| w - \sum_i [u_i , e_\lambda a_i]\| \leq \| w - e_\lambda w\| +
\| e_\lambda w - e_\lambda \sum_i [u_i, a_i] \| + \sum_i\|u_i\| \|
e_\lambda a_i - a_i e_\lambda \|,
    $$
    implying that $[I, I]$ is dense in $\ol{[I, A]}$.

    Since $\ol{[I, A]} \subseteq I$, by definition, $N(\ol{[I,
        A]}) \subseteq N(I)$ and if $x \in N(I)$, then $[x, A] \subseteq I
    \cap [A, A] \subseteq \ol{[I, A]}$ implying that $x \in N(\ol{[I,
        A]})$ and hence $N(\ol{[I, A]}) = N(I)$.
\vspace*{2mm}

If $A$ has no tracial functionals, then $\ol{[A,A]} = A$ and,
therefore,  $\ol{[I, A]} = I \cap \ol{[A, A]} =
I$.
\vspace*{2mm}

 Finally, suppose  the closed ideal $J :=\ol{\mathrm{Id}[I,
      A]}$ admits a quasi-central approximate
  identity, say,   $\{f_\mu\}$.    Since $J \subseteq I$, clearly $\ol{[J, A]} \subseteq
  \ol{[I, A]}$.  Let $x \in I$ and $a \in A$. Then, $[x, a ] \in J$,
  $[f_\mu x, a]\in [J, A]$ and \
\begin{eqnarray*}
  \| [f_\mu x , a] - [x , a]\| & \leq & \| [f_\mu x , a] -
  f_\mu [x , a ]\| + \| f_\mu[x, a] - [x , a]\| \\ & = & \|
  f_\mu a - a f_\mu \|\, \|x\| + \| f_\mu[x, a] - [x ,
    a]\| \ra 0,
  \end{eqnarray*}
implying that $[J, A]$ is dense in $[I, A]$ and hence $\ol{[I, A]} =
\ol{[J, A]}$. 
\end{proof}

 More generally, using a result by Robert \cite{Rob}, we shall show below that
 $\overline{[L,A ]} = \overline{[\text{Id}[L,A], A]}$ for any closed Lie
 ideal $L$ in an appropriate Banach algebra $A$, which generalizes
 \cite[Theorem 5.27]{bresar08}. Robert, in \cite{Rob}, has given a
 simpler proof of \cite[Theorem 5.27]{bresar08} avoiding von
 Neumann algebra tools.

Recall that a Banach algebra $A$ is said to be semiprime if $I^2 =
(0)$ implies $I = (0)$ for any closed ideal $I$. And a closed ideal
$I$ in $A$ is said to be semiprime if the quotient Banach algebra
$A/I$ is semiprime. A $C^*$-algebra and all its closed ideals are
easily seen to be semiprime.

\begin{lemma}\label{semiprime}
Let $A$ be a Banach algebra whose every closed ideal possesses a left or a
right approximate identity. Then $A$ is semiprime and so are its
closed ideals.
  \end{lemma}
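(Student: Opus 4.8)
The plan is to establish both assertions with a single approximate-identity argument, exploiting that whenever a closed ideal possesses an approximate identity, elements of that ideal are approximated by products lying in a square-type ideal. First I would prove that $A$ itself is semiprime. Suppose $I$ is a closed ideal of $A$ with $I^2 = (0)$, and let $\{e_\lambda\}$ be, say, a left approximate identity for $I$ (the case of a right approximate identity being entirely symmetric). For any $x \in I$ we have $e_\lambda x \in I \cdot I \subseteq I^2 = (0)$, so $e_\lambda x = 0$ for every $\lambda$, whence $x = \lim_\lambda e_\lambda x = 0$. Thus $I = (0)$, and $A$ is semiprime.

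For the second assertion, fix a closed ideal $I$ of $A$; I must show that $A/I$ is semiprime. By the standard correspondence between closed ideals of $A/I$ and closed ideals of $A$ containing $I$, any closed ideal of $A/I$ has the form $J/I$ for some closed ideal $J$ of $A$ with $I \subseteq J$, and the hypothesis $(J/I)^2 = (0)$ translates precisely into $J^2 \subseteq I$ (i.e. $xy \in I$ for all $x, y \in J$). Since $J$ is a closed ideal of $A$, the hypothesis of the lemma supplies it with a left or right approximate identity $\{e_\lambda\}$. Taking $x \in J$ and using, say, the left approximate identity, we obtain $e_\lambda x \in J^2 \subseteq I$ for every $\lambda$; as $I$ is closed, $x = \lim_\lambda e_\lambda x \in I$. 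Hence $J \subseteq I$, so $J = I$ and $J/I = (0)$. This shows $A/I$ is semiprime, i.e. $I$ is a semiprime ideal of $A$.

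I do not anticipate a serious obstacle here, as the argument is short and elementary. The only points requiring care are the identification of the square-zero condition in the quotient with the inclusion $J^2 \subseteq I$, and the use of the closedness of $I$ to pass to the limit inside $I$. It is also worth emphasizing that the hypothesis is applied to the ideal $J$ (respectively $I$) on which the computation is actually carried out, so that an approximate identity is guaranteed to be available exactly where it is needed; this is what makes both parts of the statement reduce to the same one-line limit computation.
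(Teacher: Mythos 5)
Your proof is correct. The first half is identical to the paper's argument: a square-zero closed ideal is killed by its own one-sided approximate identity. For the second half you diverge slightly from the paper. The paper transports the approximate identity of $J$ down to $J/I$ (noting that the image of a left or right approximate identity under the quotient map is again one), concludes that every closed ideal of $A/I$ has a one-sided approximate identity, and then simply re-invokes the first part for the algebra $A/I$. You instead stay inside $A$: you translate $(J/I)^2=(0)$ into $J^2\subseteq I$ and run the limit computation $x=\lim_\lambda e_\lambda x\in I$ directly, using the closedness of $I$ rather than the completeness of the quotient. The two routes are equally short; yours avoids the (easy but unstated-in-your-version-unnecessary) verification that approximate identities pass to quotients, while the paper's formulation has the mild advantage of exhibiting $A/I$ as another algebra satisfying the hypothesis of the lemma, so the first part applies verbatim. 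No gaps in either direction.
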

\begin{proof}
  Let $I$ be a closed ideal in $A$ such that $I^2 = (0)$. Let $x \in I$ and
  $\{e_\lambda\}$ be a right approximate identity in
  ${I}$. Then, $x e_\lambda = 0 $ for all $\lambda$ and as $x
  e_\lambda \ra x$, we get $x = 0$ implying that $I =0$. Thus, $A$ is
  semiprime.

  Next, for a closed ideal $I$ in $A$, every closed ideal in $A/I$ is
  of the form $J/I$ for some closed ideal $J$ in $A$ containing
  $I$. If ${J}$ admits a left or a right approximate identity, so does
  ${J/I}$. Therefore, every closed ideal in $A/I$ admits a left or a
  right approximate identity and, as above, $A/I$ is semiprime.
  \end{proof}

We will need the following observation 
by Br\v{e}sar et al. \cite[Proposition 5.2]{bresar08}.
\begin{prop}\label{b-semiprime}
Let $L$ be a closed Lie ideal in a Banach algebra $A$ and $I_L$ 
denote the closed ideal generated by ${[L, L]}$, i.e.,
$I_L:=\ol{\mathrm{Id}[L, L]}$. If the Banach algebra $A/I_L$ is
semiprime or commutative, then $[L, A] \subseteq I_L$.
  \end{prop}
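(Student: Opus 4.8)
The plan is to pass to the quotient and reformulate the assertion as a centrality statement. Write $\ol{A} := A/I_L$ with quotient map $q$, and set $\ol{L} := q(L)$. Since $q$ is a surjective homomorphism, $\ol{L}$ is a Lie ideal of $\ol{A}$; and as $[L,L] \subseteq I_L = \ker q$ by construction, its image satisfies $[\ol{L}, \ol{L}] = 0$. Because $q([L,A]) = [\ol{L},\ol{A}]$, the desired conclusion $[L,A]\subseteq I_L$ is exactly $[\ol{L}, \ol{A}]=0$, i.e. $\ol{L} \subseteq Z(\ol{A})$. Thus it suffices to prove that a Lie ideal $\ol{L}$ of $\ol{A}$ with $[\ol{L},\ol{L}]=0$ is central whenever $\ol{A}$ is commutative or semiprime. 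The commutative case is immediate, since then $[\ol{L},\ol{A}]=0$ trivially, so the real content is the semiprime case.

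For the semiprime case, fix $u \in \ol{L}$ and consider the inner derivation $d := \mathrm{ad}(u) = [u,\cdot]$ on $\ol{A}$. Because $\ol{L}$ is a Lie ideal we have $d(\ol{A})\subseteq \ol{L}$, and since both $u$ and $d(a)$ lie in $\ol{L}$ the hypothesis $[\ol{L},\ol{L}]=0$ forces $d^2(a)=[u,[u,a]]=0$; that is, $d^2=0$. First I would exploit the derivation identity on products: expanding $0 = d^2(ab)$ and using $d^2(a)=d^2(b)=0$ collapses it to $2\,d(a)d(b)=0$, so, as we work over $\C$, $d(a)d(b)=0$ for all $a,b$. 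Feeding this back once more, $0 = d(ab)\,d(c) = d(a)\,b\,d(c) + a\,d(b)d(c)$ with the second term vanishing, yields $d(a)\,b\,d(c)=0$ for all $a,b,c$; equivalently $V\ol{A}V = 0$ and $VV=0$, where $V := d(\ol{A})$.

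The final step is to convert these pointwise annihilation identities into $d=0$. In the purely algebraic setting one would simply invoke $y\ol{A}y=0 \Rightarrow y=0$, but the semiprimeness available here is the Banach-algebra version, phrased only for closed two-sided ideals of zero square; this bridging is where I expect the main obstacle to lie. I would resolve it by forming the algebraic two-sided ideal $\mathrm{Id}(V)= V + \ol{A}V + V\ol{A} + \ol{A}V\ol{A}$ and checking, product by product, that every product of two of these generators has an interior factor of the form $VV$, $V\ol{A}V$, or $V\ol{A}^2V \subseteq V\ol{A}V$, so that $\mathrm{Id}(V)^2=0$. Continuity of multiplication then gives $\bigl(\ol{\mathrm{Id}(V)}\bigr)^2 \subseteq \ol{\mathrm{Id}(V)^2} = 0$, and semiprimeness forces $\ol{\mathrm{Id}(V)}=0$, hence $V=0$ and $d=0$. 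Therefore $u \in Z(\ol{A})$; as $u\in\ol{L}$ was arbitrary, $\ol{L} \subseteq Z(\ol{A})$, which completes the argument.
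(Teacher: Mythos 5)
Your argument is correct, but note that the paper gives no proof of this proposition at all: it is quoted directly from Bre\v{s}ar et al.\ \cite[Proposition 5.2]{bresar08} as a known result, so there is no in-paper argument to compare against. What you have supplied is essentially the standard proof of that cited fact, written out in full: pass to $\ol{A}=A/I_L$, note that $\ol{L}:=q(L)$ is a Lie ideal with $[\ol{L},\ol{L}]=0$, and show that such a Lie ideal is central when $\ol{A}$ is semiprime via the Herstein-type computation with $d=\mathrm{ad}(u)$: $d^2=0$ forces $2\,d(a)d(b)=0$ (hence $d(a)d(b)=0$ over $\C$) and then $d(a)\,b\,d(c)=0$. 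The one point where real care is needed --- and you identified and resolved it correctly --- is that this paper's notion of semiprimeness is phrased only for \emph{closed} ideals of square zero, while the algebraic computation yields only the pointwise identities $VV=V\ol{A}V=0$ for $V=d(\ol{A})$; your passage to $\ol{\mathrm{Id}(V)}$, whose square vanishes by continuity of multiplication, bridges the two formulations and is exactly what makes the proposition usable in conjunction with \Cref{semiprime}. Two harmless remarks: $\ol{L}$ need not be closed in $\ol{A}$, but nothing in your argument requires that; and one should check (as you do implicitly) that every pairwise product of the four generating pieces of $\mathrm{Id}(V)$ contains an interior factor from $VV$ or $V\ol{A}V$, using $\ol{A}^2\subseteq\ol{A}$. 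The proof stands.
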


The following mildly generalizes \cite[Theorem 5.27]{bresar08} and a
part of \cite[Theorem 1.5]{Rob}.

\begin{thm}\label{bresar2}
   Let $L$ be a closed Lie ideal in a Banach algebra $A$ and $I$
    denote the closed ideal generated by ${[L, A]}$, i.e.,
    $I:=\ol{\mathrm{Id}[L, A]}$.  If all closed ideals of $A$
    containing $I_L$ possess quasi-central approximate identities, then
    $I = I_L$ and
\begin{equation} \ol{[I, A]} = \ol{[L, A]}.\end{equation}
 \noindent In particular, $\ol{[I, A]} \subseteq L \subseteq N(\ol{[I,
     A]})$. Moreover, if $A$ has no tracial functionals, then $I
 \subseteq L \subseteq N(I)$, as well.
\end{thm}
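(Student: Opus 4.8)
The plan is to establish the two core assertions --- that $I=I_L$ and that $\ol{[I,A]}=\ol{[L,A]}$ --- in turn, and then to read off the ``in particular'' and ``moreover'' clauses from \Cref{commutator}. Since $[L,L]\subseteq[L,A]$ we have $I_L\subseteq I$ for free, so the content of the first assertion is the reverse inclusion, and for this it suffices to show $[L,A]\subseteq I_L$. I would deduce this from \Cref{b-semiprime}, which applies as soon as $A/I_L$ is semiprime. To check the latter, observe that the closed ideals of $A/I_L$ are precisely the quotients $J/I_L$ with $J$ a closed ideal of $A$ containing $I_L$; by hypothesis each such $J$ carries a quasi-central approximate identity, whose image under the quotient map is an approximate identity for $J/I_L$. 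Thus every closed ideal of $A/I_L$ possesses an approximate identity, so $A/I_L$ is semiprime by \Cref{semiprime}, and \Cref{b-semiprime} gives $[L,A]\subseteq I_L$, whence $I=\ol{\mathrm{Id}[L,A]}\subseteq I_L$ and $I=I_L$. A bonus of this step is that $I=I_L$ is itself a closed ideal containing $I_L$, so it admits a quasi-central approximate identity $\{e_\mu\}$ and \Cref{commutator} is available for $I$.

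For the equality $\ol{[I,A]}=\ol{[L,A]}$, the inclusion $\ol{[L,A]}\subseteq\ol{[I,A]}$ is the routine direction. Fixing $l\in L$ and $a\in A$, the element $e_\mu l$ lies in $I$ while $[l,a]\in[L,A]\subseteq I$, so from $[e_\mu l,a]=e_\mu[l,a]+[e_\mu,a]\,l$ together with $e_\mu[l,a]\to[l,a]$ and $\|[e_\mu,a]\,l\|\le\|[e_\mu,a]\|\,\|l\|\to0$ one gets $[l,a]\in\ol{[I,A]}$; hence $\ol{[L,A]}\subseteq\ol{[I,A]}$.

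The reverse inclusion is the heart of the matter and is where I expect the real difficulty. Equivalently, since $\ol{[L,A]}$ is a closed Lie ideal, I must show $I\subseteq N(\ol{[L,A]})$, i.e.\ that the whole ideal generated by $[L,A]$ normalizes $\ol{[L,A]}$. This is not formal: $N(\ol{[L,A]})$ contains $[L,A]$ and is a closed subalgebra, but it need not be an ideal, so the two-sided multipliers that appear on expanding $[ucv,a]$ for a generator $ucv$ of $\mathrm{Id}[L,A]$ (with $c\in[L,A]\subseteq L$) cannot be placed inside $L$ directly. The way through, and the main obstacle, is to run the quasi-central-approximate-identity computation of \Cref{commutator} once more: by that lemma one may replace $\ol{[I,A]}$ by $I\cap\ol{[A,A]}$, and then the relation $\|[e_\mu,\cdot]\|\to0$ is used to absorb the outer multipliers into the bracket so that each approximant becomes, up to arbitrarily small error, a commutator $[l,\cdot]$ with $l\in L$. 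This is the Banach-algebra analogue of Robert's argument and is carried out by the same Leibniz-expansion-plus-absorption bookkeeping as above, now arranged to keep one factor inside $L$.

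Granting $\ol{[I,A]}=\ol{[L,A]}$, the remaining assertions are immediate. As $L$ is a closed Lie ideal, $[L,A]\subseteq L$ and hence $\ol{[I,A]}=\ol{[L,A]}\subseteq L$; and for $l\in L$ one has $[l,A]\subseteq\ol{[L,A]}=\ol{[I,A]}$, so $L\subseteq N(\ol{[I,A]})$, yielding the chain $\ol{[I,A]}\subseteq L\subseteq N(\ol{[I,A]})$. Finally, if $A$ has no tracial functionals then $\ol{[A,A]}=A$, so \Cref{commutator} gives $\ol{[I,A]}=I\cap\ol{[A,A]}=I$; substituting this into the chain just obtained turns it into $I\subseteq L\subseteq N(I)$.
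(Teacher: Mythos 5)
Your treatment of $I=I_L$ and of the inclusion $\ol{[L,A]}\subseteq\ol{[I,A]}$ is correct and essentially matches the paper (the latter inclusion also follows at once from \Cref{commutator}, since $[L,A]\subseteq I\cap[A,A]$ and $I=I_L$ admits a quasi-central approximate identity). The gap is in the reverse inclusion $\ol{[I,A]}\subseteq\ol{[L,A]}$, which you rightly flag as the heart of the matter but do not actually prove. Your proposed mechanism --- rewrite $\ol{[I,A]}$ as $I\cap\ol{[A,A]}$ and then use $\|[e_\mu,a]\|\to 0$ to ``absorb the outer multipliers into the bracket so that each approximant becomes a commutator with one entry in $L$'' --- does not work as described: an element of $I\cap\ol{[A,A]}$ is approximated by sums $\sum_i[x_ie_\mu,y_i]$ with $x_ie_\mu\in I$, and nothing forces either entry of these commutators toward $L$; the quasi-central net lives in $I$, not in $L$, and no amount of Leibniz bookkeeping by itself produces the required $L$-factor.

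The step that is actually needed is algebraic rather than approximate: since $I=I_L=\ol{\mathrm{Id}[L,L]}$, one uses the identity $a[x,y]=[ax,y]-[a,y]x$ (and its right-handed and two-sided analogues) to get $\mathrm{Id}[L,L]\subseteq L+L^2$, which is \cite[Lemma 1.4]{Rob}; hence $I\subseteq\ol{L+L^2}$ and $\ol{[I,A]}\subseteq\ol{[L+L^2,A]}=\ol{[L,A]}$, the last equality coming from $[xy,a]=[x,ya]+[y,ax]\in[L,A]$ for $x,y\in L$. Without this input (or something equivalent), your proof of the displayed equality is incomplete. The derivation of the ``in particular'' and ``moreover'' clauses from that equality is fine, as is your verification via \Cref{semiprime} and \Cref{b-semiprime} that $A/I_L$ is semiprime under the stated hypothesis.
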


\begin{proof}
By \Cref{semiprime}, $A/I_L$ is semiprime.  So, by \Cref{b-semiprime}, $[L, A] \subseteq
I_L$ implying that $I = I_L$.  It is
elementary to see that $\text{Id}[L, L] \subseteq {L + L^2}$ (see
\cite[Lemma 1.4]{Rob}), so that $ I\subseteq \ol{L + L^2}$ and,
therefore, $\ol{[I, A]} \subseteq \overline{[L + L^2 , A]} = \ol{[L,
    A]}$, where the last equality follows from the easily verifiable
fact that $[L^2, A] \subseteq [L, A]$ (see \cite[(1.1)]{Rob}).

On the other hand, since $I_L = I$, $I$ contains a quasi-central
approximate identity and, $\ol{[L, A]}
\subseteq I \cap \ol{[A, A]} = \ol{[I, A]}$, by \Cref{commutator}.

The remaining then follows again from \Cref{commutator}.
  \end{proof}

\begin{cor}\label{non-central}
  Let $A$ be a Banach algebra whose every closed ideal contains a
  quasi-central approximate identity and suppose $\mcal{TF}(A) =
  \emptyset$. Then every non-central closed Lie ideal of $A$, i.e., $L
  \nsubseteq Z(A)$, contains a non-zero closed ideal.
\end{cor}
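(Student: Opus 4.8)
The plan is to reduce the statement entirely to \Cref{bresar2}, which already packages the required machinery. Set $I := \ol{\mathrm{Id}[L, A]}$, the closed ideal generated by $[L, A]$. The first step is to observe that the hypothesis $L \nsubseteq Z(A)$ forces $I$ to be non-zero. Indeed, $[L, A] = \{0\}$ would mean that every element of $L$ commutes with every element of $A$, i.e., $L \subseteq Z(A)$, contrary to assumption. Hence there exist $x \in L$ and $a \in A$ with $[x, a] \neq 0$, so that $[L, A] \neq \{0\}$ and therefore the closed ideal $I$ it generates is non-zero.

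Next I would check that the hypotheses of \Cref{bresar2} are satisfied. By assumption every closed ideal of $A$ possesses a quasi-central approximate identity; in particular, all closed ideals containing $I_L$ do. Thus \Cref{bresar2} applies and yields $\ol{[I, A]} = \ol{[L, A]}$ together with the containment $\ol{[I, A]} \subseteq L$. Since we are moreover assuming $\mcal{TF}(A) = \emptyset$, the ``Moreover'' clause of \Cref{bresar2} upgrades this to $I \subseteq L \subseteq N(I)$.

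Combining the two steps, $L$ contains the closed ideal $I$, which is non-zero by the first step; this is exactly the assertion. The only genuine content is the elementary observation in the first step that non-centrality of $L$ is equivalent to $[L, A] \neq \{0\}$. All the substantive work---establishing $I \subseteq L$ under a vanishing-tracial-functionals hypothesis---has already been carried out in \Cref{bresar2}, so I do not anticipate any real obstacle, and the argument should occupy only a few lines.
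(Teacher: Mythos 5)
Your proof is correct and is exactly the argument the paper intends: the corollary is stated without proof as an immediate consequence of \Cref{bresar2}, whose ``moreover'' clause gives $I=\ol{\mathrm{Id}[L,A]}\subseteq L$ when $\mcal{TF}(A)=\emptyset$, and non-centrality of $L$ is precisely what makes $I$ non-zero. Nothing to add.
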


\Cref{bresar2} partially answers a question of Bre\v{s}ar et
al.~\cite[page 120]{bresar08} where they ask for suitable conditions
in Banach $*$-algebra setting so that a closed Lie ideal is closed
commutator equal to a closed ideal, and it also yields the following
characterization of closed Lie ideals:

\begin{cor}\label{ideal-lie-ideal}
If every closed ideal in a Banach algebra $A$ admits a quasi-central
approximate identity, then a closed subspace $L$ of $A$ is a Lie ideal
if and only if there exists a closed ideal $I$ in $A$ such that
$$ \ol{[I, A]} \subseteq L \subseteq N(\ol{[I, A]}). $$
\end{cor}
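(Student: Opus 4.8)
The plan is to prove \Cref{ideal-lie-ideal} as a direct consequence of \Cref{bresar2}, treating the two implications separately.

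First I would handle the direction that is independent of the hypothesis. Suppose $L$ is a closed subspace of $A$ for which there exists a closed ideal $I$ satisfying $\ol{[I, A]} \subseteq L \subseteq N(\ol{[I, A]})$. As already noted in the text preceding \Cref{commutator}, any subspace $L$ embraced by an ideal in the topological sense is automatically a Lie ideal: for $x \in L$ and $a \in A$, the containment $L \subseteq N(\ol{[I, A]})$ gives $[x, a] \in \ol{[I, A]} \subseteq L$, so $[L, A] \subseteq L$ and $L$ is a Lie ideal. This direction uses no approximate-identity hypothesis at all.

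For the converse, assume every closed ideal of $A$ admits a quasi-central approximate identity, and let $L$ be a closed Lie ideal. I would set $I := \ol{\mathrm{Id}[L, A]}$, the closed ideal generated by $[L, A]$, exactly as in \Cref{bresar2}. The hypothesis of \Cref{ideal-lie-ideal} is precisely what is needed to invoke \Cref{bresar2}: in particular, every closed ideal containing $I_L$ possesses a quasi-central approximate identity, so the theorem applies and yields $\ol{[I, A]} \subseteq L \subseteq N(\ol{[I, A]})$ as its stated conclusion. Thus $I$ is the required closed ideal, and the proof is complete.

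The main point to be careful about is simply the bookkeeping of which hypothesis powers which implication, so that the statement is not over-claimed. The forward implication (Lie ideal $\Rightarrow$ embraced) genuinely needs the approximate-identity assumption, as it rests on \Cref{bresar2} (which in turn invokes \Cref{semiprime}, \Cref{b-semiprime}, and \Cref{commutator}); the reverse is formal. Since \Cref{bresar2} has already been established and delivers the nontrivial containment verbatim, I do not expect any substantive obstacle here — the corollary is essentially a restatement of \Cref{bresar2} packaged as a characterization. The only thing worth verifying explicitly is that the global hypothesis ``every closed ideal admits a quasi-central approximate identity'' subsumes the more targeted hypothesis of \Cref{bresar2} (``all closed ideals containing $I_L$\ldots''), which it trivially does.
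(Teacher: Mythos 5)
Your proposal is correct and matches the paper's (implicit) argument: the paper presents \Cref{ideal-lie-ideal} as an immediate consequence of \Cref{bresar2}, with the forward implication obtained by taking $I := \ol{\mathrm{Id}[L,A]}$ and the reverse implication being the formal observation that a subspace topologically embraced by a closed ideal is a Lie ideal. Your bookkeeping of which direction uses the quasi-central approximate identity hypothesis is also accurate.
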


The techniqe of Robert \cite{Rob}, based on a Theorem of Herstein
\cite{Her}, yields a  stronger version of \Cref{bresar2}.
\begin{thm}\label{robert}
  Let $L$ be a closed Lie ideal in a Banach algebra $A$, $I$ denote
  the closed ideal generated by ${[L, A]}$ and $M$ denote the closed
  Lie ideal $\ol{[L,A]}$. If all closed ideals of $A$ containing $I_M
  := \overline{\mathrm{Id}[M, M]}$ possess quasi-central approximate
  identities, then
  \begin{equation}\label{id-l-a}
    \qquad \qquad I = \ol{[L, A] + [L, A]^2} = B([L,A])\ \mathrm{and}
  \end{equation}
  \begin{equation}
    \ol{[I, A]} = \ol{[L, A]} = \ol{[[L, A], A]},
  \end{equation}
  where $B([L,A])$ denotes the Banach subalgebra of $A$ generated by
  $[L,A]$.
\end{thm}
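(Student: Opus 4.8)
The plan is to run Robert's argument by applying \Cref{bresar2} twice---once to $L$ and once to the closed Lie ideal $M := \ol{[L,A]}$---and to feed in one genuinely non-formal ingredient, a theorem of Herstein, to break the circularity in the commutator bookkeeping. Write $N := [L,A]$; since $A$ is a Lie ideal of itself, $N = [L,A]$ is a Lie ideal, so $M = \ol{N}$ is a closed Lie ideal, and $N \subseteq L$ gives $M \subseteq L$. Consequently $[M,M] \subseteq [L,L]$, so $I_M \subseteq I_L$, and therefore every closed ideal containing $I_L$ also contains $I_M$ and carries a quasi-central approximate identity. Thus the hypotheses of \Cref{bresar2} hold for $L$ as well as (tautologically) for $M$. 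Applying \Cref{bresar2} to $L$ gives $I = I_L$ together with $\ol{[I,A]} = \ol{[L,A]}$, which is already the left-hand equality of the second displayed identity.

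The heart of the matter is to prove $[L,A] \subseteq I_M$. By \Cref{semiprime} the quotient $Q := A/I_M$ is semiprime, since its closed ideals correspond to the closed ideals of $A$ containing $I_M$, all of which carry quasi-central (hence ordinary) approximate identities. Applying \Cref{b-semiprime} to $M$ yields $[M,A] \subseteq I_M$, so under the quotient map $q\colon A \to Q$ the image $q(M)$ is central. Since $[q(L), Q] = q([L,A]) \subseteq q(M) \subseteq Z(Q)$, I would invoke the theorem of Herstein \cite{Her} that a Lie ideal $U$ of a $2$-torsion-free semiprime ring $R$ with $[U,R] \subseteq Z(R)$ must be central; applied to the Lie ideal $q(L)$ of $Q$ (a complex algebra, hence $2$-torsion-free) this forces $q(L) \subseteq Z(Q)$, so $[q(L),Q] = 0$, i.e.\ $[L,A] \subseteq \ker q = I_M$. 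I expect this to be the main obstacle: the formal identities $NA \subseteq N + AN$ and $AN \subseteq N + NA$ merely cycle between $AN$ and $NA$, so $\mathrm{Id}(N)$ cannot be collapsed into $N + N^2$ by bracket manipulation alone, and Herstein's theorem---which genuinely exploits semiprimeness---is precisely what supplies the missing containment.

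With $[L,A] \subseteq I_M$ the rest is bookkeeping. From $N \subseteq I_M$ I get $I = \ol{\mathrm{Id}[L,A]} \subseteq I_M$, and together with $I_M \subseteq I_L = I$ this gives $I = I_M$. Robert's elementary inclusion $\mathrm{Id}[N,N] \subseteq N + N^2$ (the analogue for the Lie ideal $N$ of the fact $\mathrm{Id}[L,L] \subseteq L + L^2$ used in \Cref{bresar2}; see \cite[Lemma 1.4]{Rob}) then yields
\[
  I = I_M = \ol{\mathrm{Id}[N,N]} \subseteq \ol{N + N^2} \subseteq B(N) \subseteq I,
\]
where $B(N)$ is the closed subalgebra generated by $N$: the last two inclusions hold because $B(N)$ contains $N + N^2$ and sits inside the closed ideal $I$. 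The sandwich collapses to $I = \ol{[L,A] + [L,A]^2} = B([L,A])$, which is \eqref{id-l-a}.

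Finally, for the remaining commutator equalities I would combine $I = \ol{N + N^2}$ with the identity $[N^2,A] \subseteq [N,A]$ (the analogue for $N$ of the fact $[L^2,A] \subseteq [L,A]$ invoked in \Cref{bresar2}; see \cite[(1.1)]{Rob}). By continuity of the bracket, $\ol{[I,A]} = \ol{[N + N^2,\,A]} = \ol{[N,A]} = \ol{[[L,A],A]}$; comparing with $\ol{[I,A]} = \ol{[L,A]}$ from the first paragraph yields $\ol{[L,A]} = \ol{[[L,A],A]}$, which finishes the proof.
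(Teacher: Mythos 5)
Your proof is correct and takes essentially the same route as the paper: the paper establishes semiprimeness of $A/I_M$ via \Cref{semiprime} and then invokes Robert's argument for \eqref{id-l-a} ``verbatim'' --- which is precisely the Herstein-based reduction you reconstruct --- and it obtains the commutator equalities from $I=\ol{[L,A]+[L,A]^2}$ together with \Cref{commutator}, which matches your combination of \Cref{bresar2} with the identity $[N^2,A]\subseteq[N,A]$. The only difference is cosmetic: you spell out the step the paper delegates to \cite[Theorem 1.5(i)]{Rob}.
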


\begin{proof}
By Lemma \ref{semiprime}, the quotient $A/I_M$ is semiprime. So, the
proof of the equalities $I = \ol{[L, A] + [L, A]^2} = B([L,A])$ given
by Robert in \cite[Theorem 1.5 (i)]{Rob} works verbatim.

Then, the inclusion $\ol{[I, A]} \subseteq \ol{[[L, A], A]} \subseteq
\ol{[L, A]}$ is immediate. Since $I_M \subseteq I$, $I$ contains a
quasi-central approximate identity, so by \Cref{commutator}, we have
$\ol{[I, A]} = I \cap \ol{[A, A]}$ and, since $[L,A] \subseteq I \cap
[A,A]$, the reverse inclusion follows.
\end{proof}

By \Cref{commutator} and \Cref{cp}, \Cref{robert} immediately yields the following:
  
\begin{cor}\label{commutator1}
Let $A$ be a Banach algebra whose every closed ideal admits a
quasi-central approximate identity.  Then, for any closed ideal $I$ in
$A$, we have
 \begin{equation}\label{comm1}\ol{[I, I]} = \ol{[I, A]} = \ol{[I, [I,A]]} =  I\,
  \cap\, \overline {[A, A]}.\end{equation} In particular,  if
 $A$ is a $C^*$-algebra with no tracial states, then $\ol{[I, A]} =
 I$.
\end{cor}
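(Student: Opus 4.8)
The plan is to read the three outer equalities straight off \Cref{commutator} and to reserve the actual work for the new middle term $\ol{[I,[I,A]]}$. Since every closed ideal of $A$, and in particular $I$ itself, carries a quasi-central approximate identity, \Cref{commutator} applies to $I$ and gives at once
\[
\ol{[I,I]} = \ol{[I,A]} = I\cap\ol{[A,A]}.
\]
So it only remains to splice $\ol{[I,[I,A]]}$ into this chain and to dispatch the tracial-state consequence.

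For the new equality I would first invoke \Cref{robert} with $L=I$: as $I$ is a closed ideal it is in particular a closed Lie ideal, and the hypothesis on quasi-central approximate identities is met because \emph{every} closed ideal of $A$ has one by assumption; the conclusion then supplies the triple-commutator identity $\ol{[I,A]} = \ol{[[I,A],A]}$. I would then prove $\ol{[I,[I,A]]} = \ol{[I,A]}$ by two inclusions. The inclusion $\subseteq$ is formal: since $I$ is a two-sided ideal, $[I,[I,A]]\subseteq I$, and trivially $[I,[I,A]]\subseteq[A,A]$, so $\ol{[I,[I,A]]}\subseteq I\cap\ol{[A,A]} = \ol{[I,A]}$.

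The reverse inclusion is where the quasi-central approximate identity does the real work, and I expect this to be the only non-formal point. Writing $\{e_\lambda\}$ for a quasi-central approximate identity of $I$ and taking $x\in I$, $a,b\in A$, I would expand
\[
[[x,a],e_\lambda b] = [[x,a],e_\lambda]\,b + e_\lambda[[x,a],b].
\]
The first summand tends to $0$ because $[x,a]\in I$ forces $\|[[x,a],e_\lambda]\|\to 0$ by quasi-centrality, while the second tends to $[[x,a],b]$ because $[[x,a],b]\in I$ and $\{e_\lambda\}$ is an approximate identity for $I$. Hence $[[x,a],b] = \lim_\lambda [[x,a],e_\lambda b]$, and each approximant lies in $[[I,A],I]=[I,[I,A]]$ since $e_\lambda b\in I$. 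This gives $[[I,A],A]\subseteq\ol{[I,[I,A]]}$, and combined with $\ol{[I,A]} = \ol{[[I,A],A]}$ from \Cref{robert} it yields $\ol{[I,A]}\subseteq\ol{[I,[I,A]]}$, closing the middle equality.

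Finally, the ``in particular'' clause is a one-line consequence of \Cref{cp}: if $A$ is a $C^*$-algebra with $\mcal{T}(A)=\emptyset$, then $\ol{[A,A]}=A$, so the identity just proved collapses to $\ol{[I,A]} = I\cap\ol{[A,A]} = I\cap A = I$. The main obstacle is precisely the single reverse inclusion above; once the trick of multiplying the outer factor $b$ by $e_\lambda$ to push it inside $I$ is in hand, everything else is bookkeeping resting on \Cref{commutator}, \Cref{robert} and \Cref{cp}.
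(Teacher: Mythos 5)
Your proof is correct and follows the paper's own (merely cited, not written out) derivation exactly: the outer equalities come from \Cref{commutator}, the identity $\ol{[I,A]}=\ol{[[I,A],A]}$ comes from \Cref{robert} applied with $L=I$, and \Cref{cp} gives the final clause. The approximate-identity computation you supply to pass from $[[I,A],A]$ to $[I,[I,A]]$ is the same trick already used inside the proof of \Cref{commutator}, so you have simply made explicit the one step the paper leaves implicit in its ``immediately yields''.
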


This yields the following generalization of \cite[Corollary
  5.26]{bresar08} and, using \Cref{commutator} and \Cref{commutator1},
the same proof works verbatim.

\begin{cor}
Let $I$ (resp., $L$) be a closed ideal (resp., Lie ideal) in a Banach
algebra $A$. If every closed ideal of $A$ possesses a quasi-central
approximate identity, then { the following are equivalent:}
\begin{enumerate}
\item $[I, A] \subseteq L \subseteq N(I)$.
  \item $\ol{[I, A]} \subseteq L \subseteq N(\ol{[I,A]})$.
\item $ \ol{[I, A]} = \ol{[L, A]}.$
\end{enumerate}
\end{cor}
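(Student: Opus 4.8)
The plan is to establish the three conditions as pairwise equivalent, leaning throughout on the two commutator identities already available for ideals admitting quasi-central approximate identities. The equivalence $(1) \Leftrightarrow (2)$ requires almost no work: since $L$ is closed, the inclusion $[I,A] \subseteq L$ is equivalent to $\ol{[I,A]} \subseteq L$, and the equality $N(\ol{[I, A]}) = N(I)$ furnished by \Cref{commutator} converts $L \subseteq N(I)$ into $L \subseteq N(\ol{[I,A]})$ and back. Thus the substance of the corollary lies in relating the embracing condition $(2)$ to the commutator-equality condition $(3)$.

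For $(3) \Rightarrow (2)$ I would argue directly. Assuming $\ol{[I,A]} = \ol{[L,A]}$, the left inclusion $\ol{[I,A]} \subseteq L$ follows because $L$ is a closed Lie ideal, whence $[L,A] \subseteq L$ and so $\ol{[L,A]} \subseteq L$. For the right inclusion $L \subseteq N(\ol{[I,A]})$, I would unwind the definition of $N$: it amounts to $[L,A] \subseteq \ol{[I,A]}$, which is immediate since $[L,A] \subseteq \ol{[L,A]} = \ol{[I,A]}$.

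The converse $(2) \Rightarrow (3)$ is where the quasi-central machinery really enters. From $L \subseteq N(\ol{[I,A]})$ and the definition of $N$ one gets $[L,A] \subseteq \ol{[I,A]}$, hence $\ol{[L,A]} \subseteq \ol{[I,A]}$; this inclusion is the easy half. The reverse inclusion $\ol{[I,A]} \subseteq \ol{[L,A]}$ is the main obstacle, and here I would invoke the iterated-commutator identity $\ol{[I,A]} = \ol{[I,[I,A]]}$ from \Cref{commutator1}. Using $[I,A] \subseteq \ol{[I,A]} \subseteq L$ (the left half of hypothesis $(2)$), together with the elementary fact that $[I,L] \subseteq [L,A]$ (by anticommutativity of the bracket, since $I \subseteq A$), one obtains $\ol{[I,A]} = \ol{[I,[I,A]]} \subseteq \ol{[I,L]} \subseteq \ol{[L,A]}$, which closes the loop.

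I expect the only delicate point to be this reverse inclusion in $(2) \Rightarrow (3)$, since it genuinely uses that the commutator of the ideal with its own commutator regenerates the full space $\ol{[I,A]}$ — precisely the content of \Cref{commutator1} that depends on the existence of quasi-central approximate identities. Everything else reduces to bracket bookkeeping and the definition of $N$, so once \Cref{commutator} and \Cref{commutator1} are in hand the argument of \cite[Corollary 5.26]{bresar08} carries over without change.
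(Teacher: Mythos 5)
Your argument is correct and follows exactly the route the paper intends: it declines to write out details, saying only that the proof of \cite[Corollary 5.26]{bresar08} carries over using \Cref{commutator} and \Cref{commutator1}, and those are precisely the two inputs you use (the identity $N(\ol{[I,A]})=N(I)$ for $(1)\Leftrightarrow(2)$, and the iterated-commutator identity $\ol{[I,A]}=\ol{[I,[I,A]]}$ for the nontrivial half of $(2)\Rightarrow(3)$). Your write-up is a faithful, correctly detailed instantiation of that same approach, with no gaps.
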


\subsection{Commutators of closed ideals in certain tensor products of $C^*$-algebras}\label{comm-section}

Apart from the usual spatial tensor product of $C^*$-algebras, we will
also be interested in some tensor products which yield Banach algebras
which are not necessarily $C^*$-algebras. As in \cite[$\S
  2$]{blecher}, a norm $\|\cdot \|_{\alpha}$ on the algebraic tensor
product $A \ot B$ of a pair of $C^*$-algebras $A$ and $B$ is said to
be
\begin{enumerate}
\item a {\em sub-cross norm} if $\|a \ot b\|_\alpha \leq \|a\|\,
  \|b\|$ for all $a \in A$, $b \in B$,
\item an {\em algebra norm} if $\|w\, z \|_{\alpha}  \leq \| w \|_{\alpha}
  \, \|z\|_{\alpha} $ for all $w, z \in A \ot B$, and 
\item a {\em tensor norm} if $\| \cdot \|_{\lambda} \leq \| \cdot
  \|_{\alpha} \leq \| \cdot \|_{\gamma}$, where $\lambda$ and $\gamma$
  are the Banach space injective and projective norms, respectively.
\end{enumerate}
Clearly, $A \otimes^{\alpha} B$, the completion of  $A \ot B$ with
respect to any algebra norm $\|\cdot\|\alpha$, is a Banach algebra. Since $\| \cdot
 \|_\gamma$ is a cross norm, every tensor norm is, therefore, sub-cross.

 The tensor products that we will be concerned with here include the
 $C^*$-minimal tensor product ($\omin$), the (operator space) Haagerup
 tensor product ($\oh$), the operator space projective tensor product
 ($\oop$) { and the Banach space projective tensor product
   ($\ot^\gamma$)}.  We refer the reader to \cite{ERbook, guichardet}
 for their definitions and essential properties. All these norms are
 sub-cross algebra tensor norms and yield Banach algebras. In fact,
 for any pair of $C^*$-algebras,  $\ot^\gamma$ (by definition) and
$\oop$  (by \cite{Kum01}) yield Banach $*$-algebras whereas the
 natural involution is not isometric with respect to $\oh$ (\cite{blecher}).

 The following proposition is an immediate generalization of
 \cite[Corollary 3.4]{ass} and yields examples of closed ideals with
 quasi-central approximate identities in Banach algebras which are not
 $C^*$-algebras.
 \begin{prop}\label{quasi-central}
   Let $A$ and $B$ be $C^*$-algebras and $\alpha$ be an algebra tensor
   norm. Let $\{I_i: 1 \leq i \leq n\}$ and $\{J_i: 1 \leq i \leq n\}$
   be closed ideals  in $A$ and $B$, respectively. Then the closed
   ideal $K := \overline{\sum_i I_i \ot J_i}^\alpha$ admits a quasi-central
   approximate identity in $A \ot^\alpha B$.
   \end{prop}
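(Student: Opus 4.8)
The plan is to establish the single-summand case $\overline{I \ot J}^\alpha$ first and then assemble the general statement from it, so that the assertion becomes, as claimed, an immediate generalization of \cite[Corollary 3.4]{ass}. The only two features of $\alpha$ that I will ever use are that it is sub-cross, i.e.\ $\|a \ot b\|_\alpha \le \|a\|\,\|b\|$, and that it is submultiplicative as an algebra norm; both hold since $\alpha$ is an algebra tensor norm. Since $A$ and $B$ are $C^*$-algebras, every $I_i$ (resp.\ $J_i$) admits a quasi-central approximate identity $\{u^{(i)}_\mu\}$ (resp.\ $\{v^{(i)}_\nu\}$) in $A$ (resp.\ $B$), consisting of positive contractions, by \cite{AP, arveson}.

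For a single pair $I \seq A$, $J \seq B$ I would take, over the product directed set, the net $e_{\mu,\nu} := u_\mu \ot v_\nu$. Then $e_{\mu,\nu} \in I \ot J \seq \overline{I \ot J}^\alpha$, and $\sup_{\mu,\nu}\|e_{\mu,\nu}\|_\alpha \le (\sup_\mu\|u_\mu\|)(\sup_\nu\|v_\nu\|) < \infty$ by the sub-cross property. For the approximate-identity condition it suffices, using submultiplicativity and uniform boundedness, to test on elementary tensors $a \ot b$ with $a \in I$, $b \in J$, where $\|a \ot b - (a u_\mu)\ot(b v_\nu)\|_\alpha \le \|a u_\mu\|\,\|b - b v_\nu\| + \|a - a u_\mu\|\,\|b\| \to 0$, a density argument then extending this to all of $\overline{I \ot J}^\alpha$. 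For quasi-centrality I test on $c \ot d$ with $c \in A$, $d \in B$ and use
\[
[\,u_\mu \ot v_\nu,\; c \ot d\,] = [u_\mu, c] \ot (v_\nu d) + (c u_\mu) \ot [v_\nu, d],
\]
whose $\alpha$-norm is at most $\|[u_\mu,c]\|\,\|v_\nu d\| + \|c u_\mu\|\,\|[v_\nu,d]\| \to 0$; density again extends this to all of $A \ot^\alpha B$. This settles the case $n=1$.

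For the general case, each $D_i := \overline{I_i \ot J_i}^\alpha$ is, by the previous step, a closed ideal carrying a quasi-central approximate identity $\{e^{(i)}_\lambda\}$, and $K = \overline{\sum_i D_i}$. To manufacture a single net lying in $\sum_i D_i$ I would pass to the unitization of $A \ot^\alpha B$ and set, over the product directed set,
\[
f_\lambda := 1 - \prod_{i=1}^{n}\bigl(1 - e^{(i)}_{\lambda_i}\bigr).
\]
Expanding the product shows that every surviving term carries at least one factor $e^{(i)}_{\lambda_i}$, so $f_\lambda \in \sum_i D_i \seq K$, and $\{f_\lambda\}$ is uniformly bounded. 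Its quasi-centrality is inherited from that of the $e^{(i)}$: by the Leibniz rule the commutator $[\prod_i(1-e^{(i)}_{\lambda_i}),w]$ is a finite sum of terms each of the form (bounded)$\,\cdot\,[e^{(i)}_{\lambda_i},w]\,\cdot\,$(bounded), and $[e^{(i)}_{\lambda_i},w]\to 0$ for each fixed $w$.

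The hard part, and the one place where the $C^*$-hypothesis is genuinely used, is the approximate-identity property of $\{f_\lambda\}$. By linearity and density it reduces to showing $z\prod_i(1-e^{(i)}_{\lambda_i}) \to 0$ for $z \in D_j$. Here $z(1 - e^{(j)}_{\lambda_j}) \to 0$, but the annihilating factor $(1-e^{(j)}_{\lambda_j})$ sits in the interior of the product, so it must be commuted into contact with $z$; a direct expansion shows this reordering always produces a residual term $z\,[e^{(i)}_{\lambda_i}, e^{(k)}_{\lambda_k}]$, and the obstacle is that quasi-centrality only yields $\|[e^{(i)}_{\lambda_i},w]\| \to 0$ for each fixed $w$, not uniformly as $w$ ranges over the bounded set $\{e^{(k)}_{\lambda_k}\}$. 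I would overcome this by choosing the local units compatibly so that $\|[e^{(i)}_{\lambda_i}, e^{(k)}_{\lambda_k}]\| \to 0$ — which the $C^*$-structure permits, since the central covers of the ideals $I_i$ in $A^{**}$ (and of the $J_i$ in $B^{**}$) mutually commute, allowing the $A$-side and $B$-side units to be taken asymptotically commuting. With approximately commuting factors the product telescopes, giving $z\prod_i(1-e^{(i)}_{\lambda_i}) \approx z(1-e^{(j)}_{\lambda_j})\prod_{i\neq j}(1-e^{(i)}_{\lambda_i}) \to 0$; the symmetric computation on the other side yields $f_\lambda z \to z$, completing the proof.
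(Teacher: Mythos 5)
Your treatment of the single summand $\ol{I \ot J}^\alpha$ is correct and is essentially the paper's own argument: the paper also forms the product directed set $\Lambda\times\Gamma$, takes $e_\lambda\ot f_\gamma$, and runs the same sub-cross-norm estimates, so that half needs no comment. The gap is in your reduction of the general case to $n=1$. The paper disposes of this step by citing \cite[Lemma 3.3]{ass} (the closure of a finite sum of closed ideals each possessing a quasi-central approximate identity again possesses one), whereas you attempt to reprove that lemma and, at the decisive moment, assert that the units can be chosen so that $\|[e^{(i)}_{\lambda_i},e^{(k)}_{\lambda_k}]\|\to 0$ jointly over the product directed set, ``since the central covers of the ideals mutually commute.'' That is not justified: the units converge to those central projections only in the weak* topology of the bidual, which gives no control whatsoever on the norm of their mutual commutators; quasi-centrality yields $\|[e^{(i)}_{\lambda_i},w]\|\to 0$ for each \emph{fixed} $w$, and asymptotic commutation against a merely bounded net is precisely what it does not yield. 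As written, this is a substantial unproven claim (and the only point where you invoke $C^*$-structure beyond the existence of the units), so the proof is incomplete.

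The obstacle you flag is, however, illusory, so the repair is to redo the estimate rather than to prove the commutation claim. Take $n=2$ and $z\in D_2$ (the general case is an induction). Then $z(1-e^{(1)})(1-e^{(2)}) = z(1-e^{(2)}) - z e^{(1)}(1-e^{(2)})$, and in the second term one should commute the \emph{fixed} element $z$ past $e^{(1)}$, not $e^{(2)}$ past $e^{(1)}$: writing $z e^{(1)} = e^{(1)} z + [z,e^{(1)}]$ gives $\|z e^{(1)}(1-e^{(2)})\| \le \|e^{(1)}\|\,\|z(1-e^{(2)})\| + \|[z,e^{(1)}]\|\,\|1-e^{(2)}\|$, and both summands tend to $0$ jointly over the product order --- the first because $\|z-z e^{(2)}_{\lambda_2}\|\to 0$ uniformly in $\lambda_1$ while $\|e^{(1)}_{\lambda_1}\|$ stays bounded, the second by quasi-centrality of $\{e^{(1)}_{\lambda_1}\}$ against the fixed $z$ while $\|1-e^{(2)}_{\lambda_2}\|$ stays bounded. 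No term of the form $z[e^{(i)},e^{(k)}]$ ever needs to be estimated. This computation (equivalently, the direct verification that $e+f-fe$ is a quasi-central approximate identity for $\ol{I+J}$) is exactly the content of the cited lemma of Allen--Sinclair--Smith; with this replacement your argument closes, is purely Banach-algebraic as it should be, and coincides in substance with the paper's proof.
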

 \begin{proof}
   By \cite[Lemma 3.3]{ass}, the closure of a finite sum of closed
   ideals containing quasi-central approximate identities in a Banach
   algebra also contains a quasi-central approximate identity. And
   since, $\overline{\sum_i I_i \ot J_i}^\alpha =
   \overline{\sum_i\overline{ I_i \ot J_i}^\alpha}^\alpha$ , it is
   enough to show that an arbitrary product ideal $\ol{I \ot
     J}^\alpha$, for ideals $I$ and $J$ in $A$ and $B$, respectively,
   admits a quasi-central approximate identity in $A \ot^\alpha
   B$.

   Let $\{e_\lambda : \lambda \in \Lambda\}$ and $\{f_\gamma : \gamma
   \in \Gamma\}$ be quasi-central approximate identities for $I$ and
   $J$ in $A$ and $B$, respectively (\cite[Theorem 1]{arveson}), with
   $K =\sup_\lambda \|e_\lambda\|$ and $L = \sup_\gamma \|
   f_\gamma\|$. The set $\Lambda \times \Gamma$ inherits a directed
   structure via the partial ordering
   $$ (\lambda_1, \gamma_1) \leq (\lambda_2, \gamma_2)
   \ \mathrm{if\ and \ only\ if}\ \lambda_1 \leq \lambda_2
   \ \mathrm{and} \  \gamma_1 \leq \gamma_2.$$ Let $e_{(\lambda,
     \gamma)}:= e_\lambda$ and $f_{(\lambda, \gamma)}:= f_\gamma$ for
   all $ (\lambda, \gamma) \in \Lambda \times \Gamma $, and set $z_\mu
   = e_\mu \ot f_\mu$ for all $\mu \in \Lambda \times
   \Gamma$. Clearly, $\{e_\mu : \mu \in \Lambda \times \Gamma\}$ and
   $\{f_\mu : \mu \in \Lambda \times \Gamma\}$ are quasi-central
   approximate identities for $I$ and $J$, respectively. We show that
   $\{z_\mu : \mu \in \Lambda \times \Gamma\}$ is a quasi-central
   approximate identiy for $\ol{I \ot J}^\alpha$ in $A \ot^\alpha
   B$. Since $\alpha$ is a sub-cross norm, $\{ z_\mu\}$ is uniformly
   bounded. Let $x = \sum_i u_i \ot v_i \in I \ot J$ and $y =
   \sum_i a_i \ot b_i \in A \ot B$. Then,

   \begin{eqnarray*}
\| x z_\mu - x\|_\alpha & \leq & \sum_i \|(u_i e_\mu -u_i) \ot v_i
f_\mu \|_\alpha + \|u_i \ot (v_i f_\mu - v_i) \|_\alpha \\ & \leq &
\sum_i L \|u_i e_\mu -u_i\|\, \|v_i\| + \|u_i\|\, \|v_i f_\mu - v_i \| \rightarrow 0,
   \end{eqnarray*}
 likewise, $\| z_\mu x - x \|_\alpha \ra 0$,  and
   \begin{eqnarray*}
\|y z_\mu - z_\mu y\| & \leq & \sum_i \| a_i e_\mu - e_\mu a_i\|\,
\|b_i f_\mu\| + \sum_i \| e_\mu a_i\|\, \| b_i f_\mu - f_\mu b_i\| \\
& \leq & \sum_i L \| a_i e_\mu - e_\mu a_i\|\,
\|b_i\| + \sum_i K\|a_i\|\, \| b_i f_\mu - f_\mu b_i\|
\ra
0.
 \end{eqnarray*}
   Since $I\ot J$ (resp., $A \ot B$) is dense in $\ol{I \ot J}^\alpha$
   (resp., $A \ot^\alpha B$), it follows that $\lim_\mu\| x z_\mu -
   x\|_\alpha = \lim_\mu \| z_\mu x - x\|_\alpha = \lim_\mu \|y z_\mu
   - z_\mu y\| =0 $ for all $ x \in \ol{I \ot J}^\alpha$ and $y \in A
   \ot^\alpha B$.
   \end{proof}

 \begin{rem}
Note that in the above theorem, we have actually proved that, if $I$
and $J$ are ideals (not necessarily closed) in $C^*$-algebras $A$ and
$B$, then the (algebraic) product ideal $I \ot J$ admits a quasi-central approximate
identity in $A \ot^\alpha B$.
   \end{rem}

 We can now easily deduce the following:
\begin{cor}\label{comm-tensor}
 Let $A$ and $B$ be $C^*$-algebras and $\alpha$ be an algebra tensor
   norm. Let $\{J_i: 1 \leq i \leq n\}$ and $\{K_i: 1 \leq i \leq n\}$
   be closed ideals  in $A$ and $B$,   respectively. Then, 
   $$ \ol{[I, I]}= \overline{[ I, A\ot^\alpha B]} = I \cap
   \overline{[A\ot^\alpha B, A \ot^\alpha B]} $$ if $I$ is a closed
   ideal in $A \ot^\alpha B$ of any of the following form:
       \begin{enumerate}
       \item $I = \overline{\sum_i J_i \ot K_i}^\alpha$.
   \item $I = \sum_i {J_i \ot^\alpha K_i}$ and $\alpha$ is either the
     Haagerup norm or the operator space projective norm.

 \item $I$ is any closed ideal in $A \ot^\alpha B$, $A$ contains only
   finitely many closed ideals and $\alpha$ is either the Haagerup
   norm or the operator space projective norm.

 \item $I$ is any ideal in $A \ot^\alpha B$ and ${\alpha}$
   is any $C^*$-tensor norm.
       \end{enumerate}
\end{cor}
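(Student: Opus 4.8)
The plan is to handle all four cases by a single mechanism: in each situation I would verify that the ideal $I$ carries a quasi-central approximate identity in $A \ot^\alpha B$, and then invoke \Cref{commutator}, which delivers the entire chain $\ol{[I,I]} = \ol{[I, A\ot^\alpha B]} = I \cap \ol{[A\ot^\alpha B, A\ot^\alpha B]}$ at one stroke. Thus the whole content is to exhibit a quasi-central approximate identity in each case, and the cases differ only in how one recognizes $I$ either as a closed sum of elementary product ideals (to which \Cref{quasi-central} applies) or as an ideal in a $C^*$-algebra.

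Case (1) is immediate, since $I = \overline{\sum_i J_i \ot K_i}^\alpha$ is exactly the form treated in \Cref{quasi-central}; hence $I$ admits a quasi-central approximate identity and \Cref{commutator} finishes it. For case (2), when $\alpha$ is the Haagerup norm or the operator space projective norm, I would first argue that each completed product ideal $J_i \ot^\alpha K_i$ sits inside $A \ot^\alpha B$ as the closed subspace $\overline{J_i \ot K_i}^\alpha$ (these norms behaving well on closed ideals, cf.~\cite{ass, JK-edin}), and that a finite sum of such product ideals is again closed. Consequently $I = \sum_i J_i \ot^\alpha K_i = \overline{\sum_i J_i \ot K_i}^\alpha$, and case (2) collapses to case (1).

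For case (3), I would appeal to the ideal structure of $A \ot^\alpha B$: when $A$ has only finitely many closed ideals and $\alpha$ is the Haagerup or operator space projective norm, I expect every closed ideal $I$ of $A \ot^\alpha B$ to decompose as a finite sum $\sum_i J_i \ot^\alpha K_i$ of product ideals, reducing case (3) to case (2). This should come from extending the topologically-simple structure theorem recalled at the start of \Cref{ideal-structure} (namely \cite[Proposition 5.2]{ass} and \cite[Theorem 3.8]{JK-edin}) along the finite ideal lattice of $A$. Finally, case (4) is a $C^*$-statement: since $\alpha$ is a $C^*$-tensor norm, $A \ot^\alpha B$ is a $C^*$-algebra, in which every ideal possesses a quasi-central approximate identity by \cite[Theorem 3.2]{AP} and \cite[Theorem 1]{arveson}, so \Cref{commutator} applies directly.

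The step I expect to be the main obstacle is case (3): showing that an arbitrary closed ideal of $A \ot^\alpha B$ is a \emph{finite} sum of product ideals genuinely generalizes the structure theorem from topologically simple $A$ to $A$ with finitely many closed ideals. Finiteness of the ideal lattice of $A$ is precisely what should make the Zorn/maximality arguments of \cite{ass, JK-edin} terminate in a finite decomposition, but making the inductive step precise—and checking that the resulting sum is closed, so that one may descend to case (2)—is the delicate point; the remaining three cases are essentially bookkeeping built on \Cref{quasi-central} and \Cref{commutator}.
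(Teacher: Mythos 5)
Your proposal is correct and follows essentially the same route as the paper: establish a quasi-central approximate identity via \Cref{quasi-central} (or the $C^*$-algebra fact for case (4)) and apply \Cref{commutator}, reducing (2) to (1) by closedness of the finite sum and (3) to (2) by the finite-sum-of-product-ideals decomposition. The one step you flag as the main obstacle---that every closed ideal of $A\ot^\alpha B$ is a finite sum of product ideals when $A$ has finitely many closed ideals---is not something the paper proves but an existing result it cites directly (\cite[Theorem 5.3]{ass} for the Haagerup norm and \cite[Theorem 3.4]{kr-1} for the operator space projective norm), so no new argument is needed there.
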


\begin{proof}\begin{enumerate}
\item is immediate from \Cref{quasi-central} and \Cref{commutator}.
  \vspace*{1mm}
  
\item\hspace*{-2mm}: By \cite[Theorem 3.8]{ass} and \cite[Proposition 3.2]{jk11},
  ${\sum_i J_i \ot^\alpha K_i}$ is a closed ideal in $A \ot^\alpha B$.
  \vspace*{1mm}
  
\item\hspace*{-2mm}: By \cite[Theorem 5.3]{ass} and \cite[Theorem
  3.4]{kr-1}, every closed ideal in $A \ot^\alpha B$ is a finite sum
  of product ideals.
\vspace*{1mm}
  
\item follows from the fact that every ideal in a $C^*$-algebra admits
  a quasi-central approximate identity (\cite{AP, arveson}).
  \end{enumerate}
  \end{proof}

\begin{rem}
If a $C^*$-algebra $A$ ($\ncong \C$) contains only finitely many
closed ideals, then for any $C^*$-algebra $B$ ($\ncong \C$), $A \oh B$
or $A \oop B$ is a Banach algebra which is not a $C^*$-alegbra
(\cite[Theorem 1]{blecher}) and, as seen above, its every closed
ideal possesses a quasi-central approximate identity.
\end{rem}

\begin{prop}\label{non-central-lie-ideal}
  Let $A$ and $B$ be $C^*$-algebras and suppose $B$ is unital. Then,
  every non-central closed Lie ideal in $A \ot^\alpha B$ contains a
  non-zero closed ideal in the following cases:
\begin{enumerate}
\item $A$ has no tracial states and $\alpha $ is any $C^*$-norm.
\item $A$ has no tracial functionals, $A$ contains only finitely many
  closed ideals and $\alpha$ is either the Haagerup norm or the
  operator space projective norm.
  \end{enumerate}
  \end{prop}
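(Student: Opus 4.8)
The plan is to reduce both cases to \Cref{non-central} by verifying, for the Banach algebra $D := A \ot^\alpha B$, the two hypotheses of that corollary: (i) every closed ideal of $D$ possesses a quasi-central approximate identity, and (ii) $\mcal{TF}(D) = \emptyset$. Once (i) and (ii) are in place, \Cref{non-central} applies verbatim and yields that every non-central closed Lie ideal of $D$ contains a non-zero closed ideal, which is exactly the assertion to be proved.

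I would first settle the tracial-functional condition (ii), which can be treated uniformly in both cases. In case (1) the hypothesis $\mcal{T}(A) = \emptyset$ gives $\ol{[A,A]} = A$ by \Cref{cp}(1); in case (2) the hypothesis $\mcal{TF}(A) = \emptyset$ gives $\ol{[A,A]} = A$ directly, by the Hahn--Banach equivalence recorded in \Cref{simple}. Thus, in either case, $[A,A]$ is dense in $A$. Since $B$ is unital, for all $a, a' \in A$ and $b \in B$ we have $[a \ot b, a' \ot 1] = [a, a'] \ot b$, so that the algebraic product $[A,A] \ot B$ is contained in the commutator span $[D, D]$. As $\alpha$ is sub-cross and $[A,A]$ is dense in $A$, an elementary approximation shows that $[A,A] \ot B$ is $\alpha$-dense in $A \ot B$, and hence in $D$; therefore $\ol{[D,D]} = D$, which is equivalent to $\mcal{TF}(D) = \emptyset$.

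It remains to verify (i). In case (1), $\alpha$ is a $C^*$-norm, so $D$ is itself a $C^*$-algebra, and every closed ideal of a $C^*$-algebra admits a quasi-central approximate identity (\cite[Theorem 3.2]{AP}, \cite[Theorem 1]{arveson}); this is also the content of \Cref{comm-tensor}(4). In case (2), where $A$ has only finitely many closed ideals and $\alpha$ is the Haagerup or the operator space projective norm, every closed ideal of $D$ is a finite sum of product ideals, and so is of the form $\ol{\sum_i J_i \ot K_i}^\alpha$; this is precisely the structure input underlying \Cref{comm-tensor}(3). \Cref{quasi-central} then furnishes a quasi-central approximate identity for each such ideal. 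With (i) and (ii) established in both cases, \Cref{non-central} completes the proof.

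The only genuinely substantial ingredient is the ideal-structure fact invoked in case (2) --- that finiteness of the set of closed ideals of $A$ forces every closed ideal of $A \oh B$ or $A \oop B$ to be a finite sum of product ideals --- on which the quasi-central approximate identity for a \emph{general} closed ideal hinges; this is already packaged into the proof of \Cref{comm-tensor}(3). Everything else, in particular the commutator-density argument establishing (ii), is routine.
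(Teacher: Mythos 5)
Your proposal is correct and follows essentially the same route as the paper: both cases are reduced to the quasi-central approximate identity machinery (\Cref{non-central}, equivalently \Cref{bresar2} with \Cref{commutator1}) by checking that $A\ot^\alpha B$ admits no traces and that all its closed ideals have quasi-central approximate identities, with case (2) resting on the finite-sum-of-product-ideals structure from \Cref{comm-tensor}(3). The only cosmetic difference is that you verify $\mcal{TF}(A\ot^\alpha B)=\emptyset$ uniformly via density of $[A,A]\ot B$ in the commutator span, whereas in case (1) the paper argues directly that no tracial state survives restriction to the copy of $A$; your version in fact spells out the justification the paper leaves implicit in case (2).
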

\begin{proof}
\begin{enumerate}
\item \hspace*{-2mm}: Since $B$ is unital, for any $C^*$-norm
  $\alpha$, $A \subseteq A \ot^\alpha B$ as a $C^*$-subalgebra, so $A
  \ot^\alpha B$ does not have any tracial states and, therefore, the
  assertion holds by \Cref{bresar2} and \Cref{commutator1}.
\item \hspace*{-2mm}: Since $\alpha$ is a cross norm (see
  \cite{ERbook}), $A \ni a \mapsto a \ot 1 \in A \ot^\alpha B$ is an
  isometric homomorphism; so, the Banach algebra $A \ot^\alpha B$
  admits no tracial functionals. The rest is then taken care of by
  \Cref{comm-tensor}(3) and \Cref{non-central}.
 \end{enumerate}
\end{proof}

\begin{thm}\label{A-B-simple}
 Let $A$ and $B$ be simple, unital $C^*$-algebras and suppose one of
 them admits no tracial functionals. If $\alpha$ is either the
 Haagerup norm, the operator space projective norm or the
 $C^*$-minimal norm, then the only closed Lie ideals of $A \ot^\alpha
 B$ are $\{0\}, \C (1 \ot 1)$ and $A \ot^\alpha B$ itself.
  \end{thm}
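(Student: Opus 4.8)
The plan is to write $D := A \ot^\alpha B$ and reduce the classification to three structural facts about $D$: topological simplicity, the absence of tracial functionals, and triviality of the center. Since $A$ and $B$ are unital, $D$ is unital with unit $1 \ot 1$, so $D$ (and the zero ideal, trivially) admits a quasi-central approximate identity, e.g.\ the constant net $\{1 \ot 1\}$. Granting topological simplicity, the only closed ideals of $D$ are $\{0\}$ and $D$, hence \emph{every} closed ideal of $D$ admits such an identity and the characterization of \Cref{ideal-lie-ideal} becomes available: a closed subspace $L$ is a Lie ideal precisely when $\ol{[I, D]} \seq L \seq N(\ol{[I, D]})$ for one of the two ideals $I \in \{\{0\}, D\}$. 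For $I = \{0\}$ this reads $L \seq N(\{0\}) = Z(D)$, while for $I = D$ it gives $\ol{[D, D]} \seq L$. So every closed Lie ideal either is contained in the center or contains $\ol{[D,D]}$, and it remains to pin down $\ol{[D,D]}$ and $Z(D)$.

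First I would verify topological simplicity. For $\alpha$ the Haagerup or operator space projective norm, simplicity of $A$ forces every closed ideal of $D$ to have the form $A \ot^\alpha J$ for a closed ideal $J \seq B$ (by \cite[Proposition 5.2]{ass} and \cite[Theorem 3.8]{JK-edin}), and simplicity of $B$ leaves only $J \in \{\{0\}, B\}$; for $\alpha = \omin$ the algebra $A \omin B$ is simple by Takesaki's theorem (\cite{Tak}). Hence in all three cases the only closed ideals of $D$ are $\{0\}$ and $D$.

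Next I would show $\ol{[D, D]} = D$, equivalently $\mcal{TF}(D) = \emptyset$; this is where the trace-free hypothesis enters, and the argument is symmetric in $A$ and $B$. Say $A$ has no tracial functionals, so $\ol{[A, A]} = A$ (by \Cref{cp}, since $A$ then has no tracial states). For a commutator $a = [x, y] \in [A, A]$ and any $b \in B$ one has the identity $a \ot b = [x \ot b,\ y \ot 1] \in [D, D]$, whence $[A, A] \ot B \seq [D, D]$; since $[A, A]$ is dense in $A$ and $\alpha$ is sub-cross, $[A,A] \ot B$ is dense in $A \ot B$ and hence in $D$, giving $\ol{[D, D]} = D$. (If instead $B$ is the trace-free factor, the mirror identity $a \ot b = [a \ot x,\ 1 \ot y]$ does the same job.) Consequently the branch $I = D$ above forces $L = D$.

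It remains to identify the center, which I expect to be the main point requiring care, since $D$ need not be a $C^*$-algebra and functional calculus is unavailable for $\oh$ and $\oop$. Because $\|\cdot\|_{\min} \leq \|\cdot\|_\alpha$ on $A \ot B$ for each admissible $\alpha$, the identity map on $A \ot B$ extends to a contractive algebra homomorphism $\theta : D \ra A \omin B$; its kernel is a closed ideal of $D$ that is not all of $D$ (as $\theta \neq 0$), so by topological simplicity $\theta$ is \emph{injective}. A central $z \in Z(D)$ commutes with the dense subalgebra $A \ot B$, and since $\theta$ fixes $A \ot B$ pointwise this makes $\theta(z)$ commute with $A \ot B$, hence $\theta(z) \in Z(A \omin B) = Z(A) \omin Z(B) = \C(1 \ot 1)$, the last equality being the classical computation of the center of a minimal tensor product together with the simplicity of $A$ and $B$. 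Injectivity of $\theta$ then forces $Z(D) = \C(1 \ot 1)$, so the central closed Lie ideals are exactly $\{0\}$ and $\C(1 \ot 1)$. Combining the two branches, the closed Lie ideals of $D$ are precisely $\{0\}$, $\C(1 \ot 1)$ and $D$. The only genuinely delicate inputs are the norm comparison $\|\cdot\|_{\min} \leq \|\cdot\|_\alpha$ needed to produce $\theta$ and the identity $Z(A \omin B) = \C(1 \ot 1)$; everything else is a direct application of the machinery of \Cref{q-c-a-i}.
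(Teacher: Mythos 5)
Your proof is correct, and its overall skeleton coincides with the paper's: establish topological simplicity of $D = A \ot^\alpha B$, show $\mcal{TF}(D) = \emptyset$ so that non-central closed Lie ideals must contain a non-zero closed ideal (hence equal $D$), and compute $Z(D) = \C(1\ot 1)$ to dispose of the central ones. The two places where you genuinely diverge are both in the supporting steps rather than the strategy. First, for the absence of tracial functionals you argue directly via the identity $[x,y]\ot b = [x\ot b, y\ot 1]$ and density of $[A,A]\ot B$, whereas the paper routes this through \Cref{non-central-lie-ideal}, observing that $a \mapsto a\ot 1$ is an isometric homomorphism and hence $D$ inherits trace-freeness from $A$; the two arguments are essentially equivalent, and yours is the more explicit. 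Second, and more substantively, for the center the paper invokes three separate results -- $Z(A\oh B) = Z(A)\oh Z(B)$ from Allen--Sinclair--Smith, the Haydon--Wassermann theorem for $\omin$, and the Kumar--Jain algebraic isomorphism $Z(A)\oop Z(B)\cong Z(A\oop B)$ -- while you reduce the $\oh$ and $\oop$ cases to the $\omin$ case via the contractive homomorphism $\theta : D \ra A\omin B$ coming from $\|\cdot\|_{\min}\leq\|\cdot\|_\alpha$, whose injectivity is forced by the topological simplicity you have already established. This is a nice economy: it needs only the Haydon--Wassermann result plus the norm comparison, at the (mild) cost of making the center computation depend on simplicity, which the paper's version does not. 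Your use of \Cref{ideal-lie-ideal} with $I\in\{\{0\},D\}$ in place of the paper's \Cref{non-central-lie-ideal} is only a cosmetic repackaging of the same quasi-central-approximate-identity machinery.
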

\begin{proof}
 If $\ot^\alpha$ is $\oh$ or $\omin$, it is known (\cite[Theorem
   2.13]{ass} and \cite[Corollary 1]{Was}) that $Z(A \ot^\alpha B) =
 Z(A) \ot^\alpha Z(B)$. And, by \cite[Theorem 3]{jk08}, the algebraic
 isomorphism $Z(A) \ot Z(B) \cong Z(A \ot B)$ extends to an algebraic
 isomorphism (not necessarily isometric) between $Z(A) \oop Z(B)$ and
 $Z(A \oop B)$. So, in all three cases, we obtain $Z(A \ot^\alpha B) =
 \C (1 \ot 1)$. In particular, the only central Lie ideals of $A
 \ot^\alpha B$ are $\{0\}$ and $\C (1 \ot 1)$.

The $C^*$-algebra $A \omin B$ is simple (see \cite[Corollary
  IV.4.21]{Tak}).  And, by \cite[Theorem 5.1]{ass} and \cite[Theorem
  3.7]{JK-edin}, the Banach algebras $A \oh B$ and $A \oop B$ are
topologically simple. So, by \Cref{non-central-lie-ideal}, $A
\ot^\alpha B$ is its only non-central closed Lie ideal.
\end{proof}
We conclude this section with the following:

\begin{thm}\label{bh-bh}
Let $H$ be an infinite dimensional separable Hilbert space. If
$\alpha$ is either the Haagerup norm, the operator space projective
norm or the $C^*$-minimal norm, then the only non-zero central Lie
ideal of $B(H) \ot^\alpha B(H)$ is $\C (1 \ot 1)$ and every
non-central closed Lie ideal of $B(H) \ot^{\alpha} B(H)$ contains the
product ideal $K(H) \ot^{\alpha} K(H)$.
  \end{thm}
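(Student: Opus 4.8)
The plan is to handle the central and non-central assertions separately, reducing each to results already in hand. For the central statement, I would start from the observation that a central Lie ideal is by definition a subspace of $Z(B(H) \ot^\alpha B(H))$. The center computations already carried out in the proof of \Cref{A-B-simple} — using \cite[Theorem 2.13]{ass} and \cite[Corollary 1]{Was} for $\oh$ and $\omin$, and \cite[Theorem 3]{jk08} for $\oop$ — identify this center with $Z(B(H)) \ot^\alpha Z(B(H))$, which collapses to $\C(1 \ot 1)$ because $Z(B(H)) = \C 1$. As this center is one-dimensional, its only non-zero subspace is $\C(1 \ot 1)$ itself, and this settles the first claim.

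For the non-central statement, write $A := B(H) \ot^\alpha B(H)$ and let $L$ be a non-central closed Lie ideal. The first step is to locate a non-zero closed ideal inside $L$. Here I would invoke \Cref{non-central-lie-ideal}: since $B(H)$ has no tracial states — hence no tracial functionals, as $\overline{[B(H), B(H)]} = B(H)$ by \Cref{cp} — and possesses only the three closed ideals $\{0\}, K(H), B(H)$, its hypotheses are satisfied in every case (part (1) when $\alpha = \omin$, part (2) when $\alpha$ is $\oh$ or $\oop$). This produces a non-zero closed ideal $I$ of $A$ with $I \subseteq L$.

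The substantive step is then to show that every non-zero closed ideal $I$ of $A$ contains $K(H) \ot^\alpha K(H)$. By \cite[Proposition 4.5]{ass}, $I$ contains a non-zero elementary tensor $a \ot b$; since $I$ is two-sided, it then contains every $(x \ot y)(a \ot b)(z \ot w) = (xaz) \ot (ybw)$. Because $B(H)$ is unital, the operator-norm closures of $\mathrm{span}\{xaz\}$ and $\mathrm{span}\{ybw\}$ are the closed ideals of $B(H)$ generated by $a$ and $b$, each non-zero and hence containing the minimal non-zero closed ideal $K(H)$. As $\alpha$ is a sub-cross norm, every $c \ot d$ with $c, d \in K(H)$ is an $\alpha$-limit of elements of $\mathrm{span}\{xaz\} \ot \mathrm{span}\{ybw\} \subseteq I$, so $K(H) \ot K(H) \subseteq I$ and therefore $K(H) \ot^\alpha K(H) \subseteq I \subseteq L$ by closedness. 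For $\oh$ and $\oop$ one can shortcut this: finiteness of the ideal lattice of $B(H)$ lets \cite[Theorem 5.3]{ass} express $I$ as a finite sum of product ideals, and any non-zero summand $J_1 \ot^\alpha J_2$ already contains $K(H) \ot^\alpha K(H)$.

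I expect the main obstacle to be the $\omin$ case. For the Haagerup and operator space projective norms the finiteness of the ideal lattice of $B(H)$ makes the ideal structure of $A$ transparent, but $B(H) \omin B(H)$ has a genuinely more intricate ideal structure; there the elementary-tensor argument, together with the sub-cross-norm density estimate, is what carries the proof, and some care is needed to confirm that \cite[Proposition 4.5]{ass} applies in this setting and that the ideal generated by a single elementary tensor really captures all of $K(H) \ot^\alpha K(H)$.
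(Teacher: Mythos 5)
Your proposal is correct and follows essentially the same route as the paper: the center computation from \Cref{A-B-simple}, then \Cref{non-central-lie-ideal} to place a non-zero closed ideal inside $L$, then the fact that any non-zero closed ideal contains a non-zero elementary tensor $a\ot b$ whose generated ideal contains $K(H)\ot^\alpha K(H)$ since $K(H)$ is the minimal non-zero closed ideal of $B(H)$. The only cosmetic difference is that for the existence of elementary tensors the paper cites, alongside \cite[Proposition 4.5]{ass}, also \cite[Corollary 4.6]{ass} and \cite[Proposition 3.6]{JK-edin} to cover the $\omin$ and $\oop$ cases, which resolves the caveat you raise at the end.
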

\begin{proof}
As in \Cref{A-B-simple}, we  obtain $Z(B(H)
 \ot^\alpha B(H)) = \C (1 \ot 1)$.

 Now, let $L$ be a non-central closed Lie ideal in $B(H) \ot^{\alpha}
 B(H)$. By a theorem of Halmos, every bounded operator on $H$ is a sum
 of two commutators, so $B(H)$ does not admit any tracial
 functionals. Thus, $L$ must contain a non-zero closed ideal by
 \Cref{non-central-lie-ideal}. By \cite[Proposition 4.5 and
   Corollary 4.6]{ass} and \cite[Proposition 3.6]{JK-edin}, every
 non-zero closed ideal of $B(H) \ot^\alpha B(H)$ contains an
 elementary tensor, say, $a \ot b$. So, $K(H)$ being the only
 non-trivial closed ideal in $B(H)$, $K(H) \ot^\alpha K(H)$ must be
 contained in $\ol{\mathrm{Id} \{a\}} \ot^\alpha \ol{\mathrm{Id}
   \{b\}}$. In other words, $K(H) \ot^\alpha K(H)$ is the unique
 minimal closed ideal which is contained in every non-zero closed
 ideal of $B(H) \ot^\alpha B(H)$.  Therefore, in all cases, $L$ must
 contain the product ideal $K(H) \ot^\alpha K(H)$.

\end{proof}

\section{Closed Lie ideals of $A \omin C(X)$} \label{a-cx}

 Let $X$ be a compact Hausdorff space and $A$ be a unital
 $C^*$-algebra. It is well known (\cite[$\S$ 5]{guichardet}) that the
 canonical map $A \omin C(X) \ni a \ot f \mapsto f(\cdot) a \in C (X,
 A)$ extends to a unital $C^*$-isomorphism.  We will be using this
 fact and its consequences in the following observations.

    We first recall, from \cite{Mar95}, certain naturally arising
    closed ideals and closed Lie ideals of $A \omin C(X)$. Some of the
    proofs were not given in \cite{Mar95}.  For the sake of
    completeness and convenience, we provide outlines of those proofs
    in bigger generality. The following folklore observation for a
    UHF $C^*$-algebra was used in \cite[Theorem 3.1]{Mar95}. We
    include the details for a more general situation.

\begin{prop}\label{ideals-CXA}
Let $X$ be a compact Hausdorff space and $A$ be a topologically simple 
$C^*$-algebra. Then every closed ideal in $C (X, A)$ is of the form
$\{ f \in C(X, A): f (s) = 0 \ \mathrm{for\ all}\ s \in F\}$ for some
closed subset $F$ of $X$.
\end{prop}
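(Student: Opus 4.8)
The plan is to exploit the identification $A \omin C(X) \cong C(X,A)$ together with the topological simplicity of $A$ to show that every closed ideal is determined by its "zero set". Given a closed ideal $I$ in $C(X,A)$, I would first define the candidate closed subset
$$ F := \{ s \in X : f(s) = 0 \ \text{for all}\ f \in I\}, $$
which is closed because each evaluation-at-$s$ map $C(X,A) \ni f \mapsto f(s) \in A$ is continuous. Write $I_F := \{ f \in C(X,A) : f(s) = 0 \text{ for all } s \in F\}$. The inclusion $I \subseteq I_F$ is immediate from the definition of $F$, so the whole content lies in proving the reverse inclusion $I_F \subseteq I$.

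For the reverse inclusion I would argue in two stages. First, for each fixed $s \notin F$, I would show that the fibre ideal $\{f(s) : f \in I\}$ is all of $A$: this set is a (closed, after a standard argument) two-sided ideal of $A$ since $I$ is an ideal and evaluation at $s$ is a surjective $*$-homomorphism onto $A$ (using that $A$ is unital, so the constant functions lie in $C(X,A)$); because $s \notin F$ it is non-zero, and topological simplicity of $A$ forces it to equal $A$. In particular, for every $s \notin F$ there is some $g \in I$ with $g(s)$ invertible, hence $g(s)^* g(s) \geq \delta\, 1$ on a neighbourhood of $s$. The second stage is a partition-of-unity / local-to-global argument: given $f \in I_F$, I would cover the compact set $\mathrm{supp-type}$ region $X \setminus F$ (more precisely, the places where $f$ is non-negligible) by finitely many such neighbourhoods on which a suitable element of $I$ is invertible, produce via functional calculus elements of $I$ that act as an approximate local inverse, and patch them with scalar-valued functions from $C(X)$ (which multiply into $I$ since $C(X) \cdot 1 \subseteq C(X,A)$ is central) to approximate $f$ arbitrarily well by elements of $I$. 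Closedness of $I$ then yields $f \in I$.

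The main obstacle I expect is the patching step: turning the pointwise surjectivity of the fibre ideals into a genuine \emph{global} approximation of an arbitrary $f \in I_F$ by elements of $I$. The delicate points are (i) ensuring the local invertibility $g(s)$ invertible persists on an open neighbourhood (continuity plus compactness, so finitely many neighbourhoods suffice), and (ii) gluing the local data by a partition of unity subordinate to this cover so that the scalar coefficients vanish on $F$ and the resulting combination both lies in $I$ and approximates $f$ in the sup-over-$X$ norm. A clean way to handle (ii) is to first reduce to $f$ supported away from $F$ (possible since $f$ vanishes on $F$ and $X$ is compact Hausdorff, hence normal, so $f$ is a uniform limit of functions vanishing on a neighbourhood of $F$), and then write $f = \sum_j \varphi_j f$ for a finite partition of unity $\{\varphi_j\}$ with each $\varphi_j$ supported where some $g_j \in I$ is invertible; on each patch $\varphi_j f = (\varphi_j f\, g_j^{-1})\, g_j$ formally, which one realises honestly inside $I$ via continuous functional calculus on $g_j^* g_j$. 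Everything else is routine verification that the constructed elements lie in $I$ and converge to $f$.
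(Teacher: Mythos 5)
Your overall strategy is sound and genuinely different from the paper's. The paper first invokes \Cref{simple-ideal} (using that $C(X)$ is nuclear) to write the given closed ideal as a product ideal $A \omin J(F)$, and then shows by a partition-of-unity approximation of $f$ by finite sums $\sum_i \varphi_i(\cdot)f(x_i)$ that this product ideal coincides with the vanishing ideal $\widetilde{J}(F)$. You instead work intrinsically in $C(X,A)$: you take $F$ to be the common zero set of $I$, prove that the fibre $\mathrm{ev}_s(I)$ is a non-zero closed ideal of $A$ (hence all of $A$ by topological simplicity) for $s \notin F$, and then patch local data with a partition of unity. This avoids \Cref{simple-ideal} entirely and is in that sense more elementary and self-contained; both proofs share the partition-of-unity step.

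There is, however, a genuine gap: the proposition does not assume $A$ unital, and your second stage hinges on finding $g \in I$ with $g(s)$ invertible and then dividing, via functional calculus on $g^*g$, to realise $\varphi_j f = (\varphi_j f g_j^{-1})g_j$ inside $I$. A topologically simple $C^*$-algebra need not be unital --- $K(H)$ is the standard example --- and a non-unital $C^*$-algebra contains no invertible elements at all, so neither ``$g(s)$ invertible'' nor ``$g(s)^*g(s)\geq \delta\,1$'' makes sense in the stated generality. (The inclusion $C(X)\cdot 1 \subseteq C(X,A)$ also needs a unit, though that is repairable with an approximate unit of $A$.) The fix is to drop the inversion entirely: since $\mathrm{ev}_{x_j}(I)=A$, choose $g_j \in I$ with $g_j(x_j) = f(x_j)$, so that $\|g_j(t) - f(t)\|<\epsilon$ for $t$ in a neighbourhood $U_j$ of $x_j$; cover $\{t : \|f(t)\|\geq\epsilon\}$ by finitely many such $U_j$, add the open set where $\|f\|<\epsilon$ (paired with $g_0=0$), and take $\sum_j \varphi_j g_j$ for a subordinate partition of unity, noting that each $\varphi_j g_j$ lies in $I$ by approximating $\varphi_j$ with $\varphi_j e_\lambda \in C(X,A)$ for an approximate unit $(e_\lambda)$ of $A$. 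This is essentially the approximation the paper performs, and with it your argument closes.
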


\begin{proof}
Let $I$ be a closed ideal in $C(X,A)$. From \Cref{simple-ideal} and
the well known fact that every closed ideal in $C(X)$ is of the form
$J(F) := \{ f \in C(X): f (s) = 0 \ \mathrm{for\ all}\ s \in F\}$ for
some closed subset $F$ of $X$, $I$ corresponds to the ideal $A \omin
J(F)$ in $A \omin C(X)$. It is enough to show that $I =
\widetilde{J}(F)$ where $\widetilde{J}(F) := \{ f \in C(X, A): f (s) =
0 \ \mathrm{for\ all}\ s \in F\}$, which is clearly a closed ideal in
$C(X, A)$.

Clearly $I \subseteq \widetilde{J}(F)$. To obtain the equality we just
need to show that $I$ is dense in $\widetilde{J}(F)$. Let $f \in
\widetilde{J}(F) $ and $\epsilon > 0$. For each $a \in A$, consider the
open ball $B_\epsilon(a) : = \{ x \in A : \| x - a \| < \epsilon\}$
and the punctured open ball $B_\epsilon^\times(a) : = \{ x \in A
\setminus \{0\} : \| x - a \| < \epsilon\}$. The collection $\{
f^{-1}(B_\epsilon^\times(f(x)) ) : x \in X \setminus F\}\cup
\{f^{-1}(B_\epsilon(0))\}$ is an open cover of $X$. Fix a finite
subcover, say, $ \{ f^{-1}(B_\epsilon^\times(f(x_i) )) : 1 \leq i \leq
n \}\cup \{f^{-1}(B_\epsilon(0))\}$. Since $X$ is compact and
Hausdorff, there exists a partition of unity $\{\varphi_i : 0 \leq i
\leq n\}$ such that $\mathrm{supp} (\varphi_i) \subseteq
U_i:=f^{-1}(B_\epsilon^\times(f(x_i) ))$ for $1 \leq i \leq n$ and
$\mathrm{supp} (\varphi_0) \subseteq U_0:=
f^{-1}(B_\epsilon(0))$. Then, $\varphi_i \in J(F)$ for all $1 \leq i
\leq n$, so that $\sum_{i=1}^n f(x_i) \ot \varphi_i \in A \ot
J(F)$. Fix an $x_0 \in F$. Then, for each $x \in X$, we have
\begin{eqnarray*}
\|f(x) - \sum_{i=1}^n \varphi_i(x) f(x_i) \| & = & \|f(x) \sum_{i=0}^n \varphi_i(x) -
\sum_{i=0}^n \varphi_i(x) f(x_i)\|\\
& \leq & \sum_{i=0}^n \|f(x) - f(x_i) \|  \varphi_i(x) \\
& = &   \sum_{i\,  :\,  x\, \in\, U_i} \|f(x) - f(x_i) \| \varphi_i(x) \\
& < &  \epsilon.
\end{eqnarray*}
In particular, $\|f - \sum_{i=1}^n  f(x_i) \varphi_i \| < \epsilon$,
implying that $I$ is dense in $\widetilde{J}(F)$.
\end{proof} 

For a Lie ideal $L$ in a unital $C^*$-algebra $A$, a subspace $S$ and
an ideal $J$ in $C(X)$, it is easily seen that $L \ot J + \C 1 \ot S$
is a Lie ideal in $A \ot C(X)$. Note that, if $L$, $J$ and $S$ are
closed, then it is not clear whether the sum $\overline{L \ot J} + \C
1 \ot S$ is closed or not. Marcoux, in \cite[Theorem 3.1]{Mar95}, had
shown that for a UHF $C^*$-algebra $A$, the sum $\overline{sl(A) \ot
  J} + \C 1 \otimes S$ is always closed.  Exploiting Marcoux's
technique, we prove the same in a more general setting. Before that,
we first make the following observation:

\begin{lemma}\label{sla-ot-J}
Let $X$ be a compact Hausdorff space and $A$ be a $C^*$-algebra with
$\mathcal{T}(A) \neq \emptyset$. Then, for any closed set $F$ in $X$,
the closed Lie ideal $L(F):=\ol{sl(A) \ot J(F)}$ in  $ A \omin C(X)$
corresponds to the closed Lie ideal $$\widetilde{L}(F) :=\{ f \in C(X, A):
f(s) = 0\ \mathrm{for\ all}\ s \in F\ \mathrm{and}\  \varphi \circ f =
0\ \mathrm{for\ all}\ \varphi \in \mathcal{T}(A)\}$$ in $C(X,A)$.
  \end{lemma}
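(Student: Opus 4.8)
The plan is to transport everything through the canonical $\ast$-isomorphism $A \omin C(X) \cong C(X,A)$ and to recognize $\widetilde{L}(F)$ as the space of continuous $sl(A)$-valued functions that vanish on $F$. First I would record the pointwise reformulation: since $sl(A) = \cap\{\ker\varphi : \varphi \in \mcal{T}(A)\}$, a function $f \in C(X,A)$ satisfies $\varphi \circ f = 0$ for every $\varphi \in \mcal{T}(A)$ if and only if $f(x) \in sl(A)$ for every $x \in X$. Hence $\widetilde{L}(F) = \{ f \in C(X, sl(A)) : f(s) = 0 \text{ for all } s \in F\}$. That this is a closed subspace is immediate; and it is a Lie ideal because for $g \in C(X,A)$ and $h \in \widetilde{L}(F)$ one has $[g,h](s) = [g(s),h(s)] = 0$ on $F$, while $\varphi([g(x),h(x)]) = 0$ for all $x \in X$ and all $\varphi \in \mcal{T}(A)$ by the trace property, so $[g,h] \in \widetilde{L}(F)$.

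Next I would verify the easy inclusion. Under the isomorphism an elementary tensor $a \ot f$ with $a \in sl(A)$ and $f \in J(F)$ corresponds to the function $x \mapsto f(x)\,a$, which vanishes on $F$ and takes values in $sl(A)$ (as $\varphi(f(x)a) = f(x)\varphi(a) = 0$). Thus the image of $sl(A) \ot J(F)$ lies in $\widetilde{L}(F)$, and since $\widetilde{L}(F)$ is closed, the image of $L(F) = \ol{sl(A) \ot J(F)}$ is contained in $\widetilde{L}(F)$.

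The substance of the proof is the reverse inclusion, for which it suffices to show that the image of $sl(A) \ot J(F)$ is dense in $\widetilde{L}(F)$; here I would imitate verbatim the covering argument of \Cref{ideals-CXA}. Given $f \in \widetilde{L}(F)$ and $\epsilon > 0$, cover $X$ by the open sets $f^{-1}(B_\epsilon^\times(f(x)))$ for $x \in X \setminus F$ together with $f^{-1}(B_\epsilon(0))$, extract a finite subcover indexed by $x_1,\dots,x_n$ (plus the $0$-ball), and take a subordinate partition of unity $\{\varphi_i : 0 \leq i \leq n\}$. Because $f$ vanishes on $F$, each punctured-ball preimage $U_i = f^{-1}(B_\epsilon^\times(f(x_i)))$ is disjoint from $F$, so $\varphi_i \in J(F)$ for $1 \leq i \leq n$; moreover each value $f(x_i)$ lies in $sl(A)$ by the pointwise reformulation. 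Consequently $\sum_{i=1}^n \varphi_i(\cdot)\,f(x_i)$ is the image of an element of $sl(A) \ot J(F)$, and the same estimate as in \Cref{ideals-CXA} (fixing $x_0 \in F$, so that $f(x_0)=0$ and the $i=0$ term drops out) yields $\|f - \sum_{i=1}^n \varphi_i(\cdot) f(x_i)\| < \epsilon$. This gives density, and hence the desired correspondence $L(F) \leftrightarrow \widetilde{L}(F)$.

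The only point requiring care — and the one genuinely beyond \Cref{ideals-CXA} — is handling the two constraints simultaneously: vanishing on $F$ is ensured by the supports of the $\varphi_i$ exactly as in the ideal case, whereas membership in $sl(A)$ is automatic, since the approximant is assembled from the actual values $f(x_i)$, which already lie in $sl(A)$. Thus no new analytic difficulty arises; the crux is simply the pointwise characterization of the condition $\varphi \circ f = 0$ and the observation that the trace property is what makes $\widetilde{L}(F)$ a Lie ideal in the first place.
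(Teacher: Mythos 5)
Your proof is correct and follows essentially the same route as the paper: both reduce the statement to showing that the image of $sl(A)\ot J(F)$ is dense in $\widetilde{L}(F)$, produce the approximant $\sum_{i=1}^n \varphi_i(\cdot)\,f(x_i)$ via the covering/partition-of-unity argument of \Cref{ideals-CXA}, and observe that $f(x_i)\in sl(A)$ because $sl(A)=\cap_{\varphi\in\mathcal{T}(A)}\ker\varphi$. The additional verifications you spell out (the pointwise reformulation of $\varphi\circ f=0$, the Lie-ideal property of $\widetilde{L}(F)$, and the easy inclusion) are exactly the points the paper dismisses as ``clear.''
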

\begin{proof}
Clearly, under the canonical $*$-isomorphism between $ A \omin C(X)$
and $C(X, A)$, the closed Lie ideal ${L}(F)$ is mapped onto a closed
Lie ideal in $\widetilde{L}(F)$. It just remains to show that the
image is dense in $\widetilde{L}(F)$. Let $f \in \widetilde{L}(F)$ and
$\epsilon > 0$. Then, $f \in \widetilde{J}(F)$, and as in the proof of
\Cref{ideals-CXA}, there exist finite sets $\{x_1, \ldots, x_n\}
\subseteq X \setminus F$ and $\{\varphi_1, \ldots, \varphi_n\}
\subseteq C(X)$ such that $\varphi_i(F) = \{0\}$ for all $1 \leq i
\leq n$ and $\| f - \sum_{i = 1}^n f(x_i) \varphi_i\| <
\epsilon$. Since $sl(A) = \cap_{\varphi \in \mcal{T}(A)} \ker
(\varphi)$ and $f \in \widetilde{L}(F)$, it readily follows that
$\sum_{i = 1}^n f(x_i) \ot \varphi_i \in sl(A) \ot J(F)$ and we are
done.
  \end{proof}

An adaptation of Marcoux's proof, gives us
the following generalization of the sufficient condition of
\cite[Theorem 3.1]{Mar95}.

\begin{prop}\label{lie-ideals}
  Let $X$ be a compact Hausdorff space and $A$ be a unital
  $C^*$-algebra.  If $\mcal{T}(A) \neq \emptyset$, then a subspace of
  the form $ L = \overline{sl(A) \ot J} + \C 1 \otimes S$, where $J$
  is a closed ideal and $S$ is a closed subspace in $C(X)$, is a
  closed Lie ideal in the $C^*$-algebra $A \omin C(X)$.
\end{prop}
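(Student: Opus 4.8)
The plan is to transport everything to $C(X,A)$ via the canonical $*$-isomorphism $A \omin C(X) \cong C(X,A)$ and to exploit the fact that a tracial state detects the central summand. First I would reduce to the case $J = J(F)$: since every closed ideal of $C(X)$ has this form for some closed $F \seq X$, \Cref{sla-ot-J} identifies $\ol{sl(A) \ot J}$ with the closed Lie ideal $\widetilde{L}(F) \seq C(X,A)$, while $\C 1 \ot S$ corresponds to $\{g\cdot 1_A : g \in S\}$, where $g \cdot 1_A$ denotes the function $x \mapsto g(x) 1_A$. Thus $L$ corresponds to $\widetilde{L}(F) + \{g \cdot 1_A : g \in S\}$, and it suffices to prove this sum is a closed Lie ideal of $C(X,A)$.

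The Lie ideal part is routine. As in the remark preceding \Cref{sla-ot-J}, $sl(A) \ot J$ is a Lie ideal of the algebraic tensor product $A \ot C(X)$, so by continuity of the bracket (first fixing one argument, then the other, and passing to closures) $\ol{sl(A) \ot J}$ is a closed Lie ideal of $A \omin C(X)$. Moreover $\C 1 \ot S$ is central, since $1 \ot g$ commutes with every elementary tensor. As a sum of a Lie ideal and a central subspace is again a Lie ideal, $L$ is a Lie ideal; the only genuine content is closedness, which the remark flags as unclear.

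The key observation handling closedness is that a tracial state reads off the $S$-coordinate. Fix $\varphi \in \mcal{T}(A)$, which is nonempty by hypothesis. For any $h = f + g\cdot 1_A \in L$ with $f \in \widetilde{L}(F)$ and $g \in S$, the defining conditions $\varphi \circ f = 0$ and $\varphi(1_A) = 1$ give $\varphi \circ h = g$ as elements of $C(X)$; in particular $g$ is uniquely determined by $h$, independently of the chosen decomposition. Now suppose $h_n = f_n + g_n \cdot 1_A \in L$ with $h_n \ra h$ uniformly. Since $\|\varphi\| = 1$, the map $C(X,A) \ni k \mapsto \varphi \circ k \in C(X)$ is contractive, whence $g_n = \varphi \circ h_n \ra \varphi \circ h =: g$ uniformly; as $S$ is closed, $g \in S$. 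Consequently $f_n = h_n - g_n \cdot 1_A \ra h - g\cdot 1_A =: f$, and since $\widetilde{L}(F)$ is closed, $f \in \widetilde{L}(F)$. Therefore $h = f + g\cdot 1_A \in L$, which proves $L$ is closed.

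I expect the main obstacle to lie entirely in this closedness step, and the crucial point is that the hypothesis $\mcal{T}(A) \neq \emptyset$ is used exactly to produce a norm-one functional $\varphi$ separating the central part from $\widetilde{L}(F)$. The argument is robust against the choice of $\varphi$, since any two tracial states vanish on $\widetilde{L}(F)$ and both return $g$ on $g \cdot 1_A$; this mirrors Marcoux's technique in \cite[Theorem 3.1]{Mar95}, here carried out for an arbitrary simple-type reduction via \Cref{ideals-CXA} and \Cref{sla-ot-J} rather than only for a UHF algebra.
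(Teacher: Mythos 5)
Your proof is correct and follows essentially the same route as the paper: both arguments pass to $C(X,A)$, use a tracial state $\varphi$ to read off the $\C 1\ot S$ coordinate of a convergent sequence (via $\varphi\circ(\,\cdot\,)$ being contractive and vanishing on the $\ol{sl(A)\ot J}$ summand), and then conclude from the closedness of $\C 1\ot S$ and of $\widetilde{L}(F)$ as identified in \Cref{sla-ot-J}. The only differences are cosmetic: you phrase the recovery of the central coordinate as $g_n=\varphi\circ h_n$ directly, while the paper shows $\{\mu_n\}$ is Cauchy and then verifies the two defining conditions of $\widetilde{L}(F)$ for the limit.
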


\begin{proof} 
It is easy to verify that $L = \overline{sl(A) \ot J} + \C 1 \ot S $ is a
  Lie ideal. We only need to show that $L$ is closed.  Now, let $\{f_n\}$
  be a sequence in $L$ converging to some $f$ in $A \omin
  C(X)$. Decompose $f_n = g_n+ h_n$, where $g_n \in \ol{sl(A) \ot J}$ and
  $h_n \in \C 1 \ot S$. Since $A \omin C(X) $ is isometrically
  isomorphic to $C(X,A)$, we can assume that $\varphi \circ f_n,
  \varphi \circ f \in C(X)$ for all $\varphi \in \mcal{T}(A)$ and $n
  \geq 1$.

Clearly $\{\varphi \circ f_n\}$ is uniformly
convergent to $\varphi \circ f$ for all $\varphi \in
\mcal{T}(A)$. Since $g_n \in \ol{sl(A) \ot J}$, we have $\varphi \circ
f_n = \varphi \circ h_n$, so that $\varphi \circ h_n$ also converges
uniformly to $ \varphi \circ f$ for all $\varphi \in \mcal{T}(A)$.

Since $h_n \in \C 1 \ot S$, there exists a $\mu_n \in C(X)$ such
that $h_n(x) = \mu_n(x) 1$ for all $ x \in X$; so that $ \varphi \circ
h_n = \mu_n$ for all $\varphi \in \mcal{T}(A)$ and $ n \geq 1$. This
implies that $\{h_n\}$ is Cauchy and hence converges uniformly to some
$h$ in $A \omin C(X)$. Since $S$ is closed, $\C 1 \ot S$ is closed and
we have $h \in \C 1 \ot S$.

Finally, $\{g_n\} $ converges uniformly to $f-h =g$ (say) in $C(X,
A)$. Since $\varphi \circ g_n = 0$ for all $\varphi \in \mcal{T}(A)$
and $ n \geq 1$, we have $\varphi \circ g = 0$ for all $\varphi \in
\mcal{T}(A)$. Also, if $X_J := \{ x \in X: f (x) =
0\ \mathrm{for\ all}\ f \in J\}$, then $g_n(X_J) =\{0\}$ for all $n$
and hence $g(X_J) = \{0\}$ as well. Therefore, by \Cref{sla-ot-J}, $g \in
\overline{sl(A) \ot J}$ and $L$ is closed.
\end{proof}

In the reverse direction, as yet another application of the
$C^*$-isomorphism between $A \omin C(X)$ and $C(X, A)$, we will have
two instances to appeal to the following observation (from
\cite[Theorem 3.1]{Mar95}):

\begin{lemma}\label{NI}
 Let $X$ be a compact Hausdorff space and $A$ be a simple unital
 $C^*$-algebra.  For any closed ideal $I$ in $A \omin C(X)$, we have
 $$N(I) = I + \C 1 \otimes C(X).$$
\end{lemma}
\begin{proof}
By \Cref{simple-ideal}, $I$ is of the form $A \omin J$ for some closed
ideal $J$ in $C(X)$. Clearly, $A \omin J+ \C 1 \otimes C(X) \seq N(A
\omin J)$. And if $ f \in N(A \omin J) \seq C(X, A)$, then $[f, g] \in
A \omin J$ for all $g \in C(X, A)$. In particular, for each $a \in A$,
if $f_a \in C(X, A)$ denotes the constant function taking the value
$a$, then $f(x)a - a f(x) = 0$ for all $x \in F$, where $F$ is the
closed set in $X$ that determines the closed ideal $J$, i.e., $J(F) =
J$. Thus, $f(x) \in \mcal{Z}(A) = \C 1$ for all $x \in F$. If $g :=
f_{|_F}$, then after identifying $C(X)$ with $\C1 \ot C(X)$, by
Tietze's Extension Theorem, $g$ can be extended to a scalar-valued map
$\tilde{g}$ on $X$ so that $\tilde{g} \in \C 1 \ot C(X)$. Since $f (x)
- \tilde{g} (x) = 0$ for all $ x \in F$, we have $f - \tilde{g} \in A
\omin J$, so that $ f \in A \omin J + \C 1 \ot C(X).$
  \end{proof}
    
\begin{lemma}\label{AI-L-NI}
 Let $X$ be a compact Hausdorff space, $A$ be a simple unital
 $C^*$-algebra and $J$ be a closed
 ideal in $C(X)$. Then,
 \begin{enumerate}
 \item $ \ol{[A \omin C(X), A \omin J]} = \left\{ \begin{array}{ll}
   \overline{sl(A) \ot J} & \mathrm{if}\ \mcal{T}(A) \neq \emptyset,
   \ \mathrm{and}\\ A \omin J & \mathrm{if}\ \mcal{T}(A) =
   \emptyset.\end{array}\right.$
\item If $A$ admits a unique tracial state and $L$ is a closed subspace of $A \omin C(X)$ satisfying
 $$ \ol{[A \omin C(X), A \omin J]} \seq L \seq N(A \omin J),
  $$ then $L$ is a closed Lie ideal and is of the form $L = \ol{sl( A) \ot J}
  + \C 1 \otimes S$ for some closed subspace $S$ in $C(X)$.
\end{enumerate}
 \end{lemma}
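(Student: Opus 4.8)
The plan is to handle the two parts separately: part (1) by a direct computation at the level of elementary tensors together with the Cuntz--Pedersen dichotomy, and part (2) by transporting the problem into $C(X,A)$ and extracting a scalar part via the trace. For (1), write $B := A \omin C(X)$ and $I := A \omin J$. Since the commutator map is jointly continuous and $A \ot J$ (resp.\ $A \ot C(X)$) is dense in $I$ (resp.\ $B$), I would first note $\ol{[I, B]} = \ol{[A \ot J,\, A \ot C(X)]}$. On elementary tensors, commutativity of $C(X)$ gives $[a \ot j, b \ot f] = [a,b] \ot jf$; as $J$ is an ideal of the unital algebra $C(X)$, the products $jf$ span $J$, so the linear span $[A \ot J, A \ot C(X)]$ equals $[A,A]\ot J$. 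Because $\omin$ is a cross norm, the closure of $[A,A]\ot J$ coincides with that of $\ol{[A,A]}\ot J$, and \Cref{cp} gives $\ol{[A,A]}=sl(A)$ when $\mcal{T}(A)\neq\emptyset$ and $\ol{[A,A]}=A$ otherwise. The first case yields $\ol{sl(A)\ot J}$ and the second yields $\ol{A\ot J}=A\omin J$, which is precisely the claimed dichotomy.

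For (2), assume $A$ has a unique tracial state $\tau$, so $sl(A)=\ker\tau$ and, by part (1), $\ol{[B,I]}=\ol{sl(A)\ot J}$. Since $B$ is a $C^*$-algebra, each of its closed ideals admits a quasi-central approximate identity, so \Cref{commutator} gives $N(I)=N(\ol{[I,B]})$; combined with the hypothesis $\ol{[I,B]}\seq L\seq N(I)$ this exhibits $L$ as topologically embraced by $I$, whence $L$ is a closed Lie ideal by \Cref{ideal-lie-ideal}. For the structural form I would pass to $C(X,A)$ via the canonical $C^*$-isomorphism and, writing $J=J(F)$ with $F:=X_J$, record that $\ol{sl(A)\ot J}$ corresponds to $\{f : f|_F=0,\ \tau\circ f\equiv 0\}$ (by \Cref{sla-ot-J}) and $N(I)$ to $\{f : f(x)\in\C 1\ \text{for all}\ x\in F\}$ (from the proof of \Cref{NI}, using $Z(A)=\C 1$).

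The crux is to produce $S$. I would define the continuous linear map $\rho:C(X,A)\ra C(X)$ by $\rho(f)(x):=\tau(f(x))$ and set $S:=\rho(L)$. For $f\in L\seq N(I)$ and $x\in F$ one has $f(x)=\lambda(x)1$ with $\lambda(x)=\tau(f(x))=\rho(f)(x)$, so $f-\rho(f)\cdot 1$ vanishes on $F$ and has identically zero trace; hence $f-\rho(f)\cdot 1\in\ol{sl(A)\ot J}\seq L$. This shows at once that $f\in\ol{sl(A)\ot J}+\C 1\ot S$ and that $\C 1\ot S\seq L$, giving $L=\ol{sl(A)\ot J}+\C 1\ot S$. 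The main remaining obstacle is the closedness of $S$, since continuity of $\rho$ only presents $S$ as an image. I would settle this by proving $\C 1\ot S=L\cap(\C 1\ot C(X))$: the inclusion $\seq$ is the computation above, while for $\supseteq$ any $g\cdot 1\in L$ satisfies $\rho(g\cdot 1)=g$, so $g\in S$. As $\C 1\ot C(X)$ is closed in $B$, so is $\C 1\ot S$, and therefore $S$ is a closed subspace of $C(X)$, completing the proof.
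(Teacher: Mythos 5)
Your proof is correct and follows essentially the same route as the paper's: part (1) reduces the commutator to $\overline{\overline{[A,A]}\otimes J}$ and applies \Cref{cp}, and part (2) combines \Cref{commutator}, \Cref{NI} and \Cref{sla-ot-J} to sandwich $L$ between $\overline{sl(A)\otimes J}$ and $\overline{sl(A)\otimes J}+\C 1\otimes C(X)$, exactly as the paper does. The only difference is that you explicitly construct $S$ via the slice map $f\mapsto \tau\circ f$ and establish its closedness through the identity $\C 1\otimes S = L\cap(\C 1\otimes C(X))$ --- details the paper omits by deferring to Marcoux's argument --- so your write-up completes rather than diverges from the paper's proof.
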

 
\begin{proof}

$(1)$ By \Cref{commutator}, we have $ \ol{[A \omin C(X), A \omin J]} =  \ol{[A
     \omin J, A \omin J]}$ and it is easily verified that $ \ol{[A
     \omin J, A \omin J]} = \overline{\ol{[A,A]} \ot J}$.
 Therefore, by \Cref{cp}, we obtain the desired forms for $ \ol{[A \omin C(X), A \omin J]}$.
 \vspace*{2mm}

 $(2)$ By \Cref{commutator}, we also have $N(A \omin J) = N(\ol{[A \omin
     C(X), A \omin J]})$, and therefore $L$ is a closed Lie ideal.  The
 fact that  $L$ must be of above form follows on the lines of a part of the
 proof of \cite[Theorem 3.1]{Mar95}. We, therefore, just mention the
 steps involved and omit the details:

From $(1)$, we see that $\overline{sl(A) \ot J} \seq L$. By \Cref{NI},
we have $N(A \omin J) = A \omin J + \C 1 \otimes C(X)$, and since
$\mcal{T}(A)$ is a singleton, we also have $A = \C 1 \oplus sl(A)$.
Using \Cref{ideals-CXA} and \Cref{sla-ot-J}, one then deduces that, in
fact, $N(A\omin J) = \overline{sl(A) \ot J} + \C 1 \ot C(X)$. And,
therefore, since $L$ is a closed subspace satisfying
$$ \overline{sl(A) \ot J} \seq L \seq \overline{sl(A) \ot J} + \C 1
\ot C(X),$$ by \Cref{lie-ideals}, we
must have $L = \overline{sl(A) \ot J} + \C 1 \ot S$ for some closed
subspace $S$ of $C(X)$. 
\end{proof}

Note that a subspace of the form $ L = \overline{A \otimes J + \C 1
  \otimes S}$ where $J$ is a closed ideal and $S$ is a subspace of
$C(X)$ is clearly a closed Lie ideal of the $C^*$-algebra $A \omin
C(X)$.  The crux of the next theorem is that all closed Lie ideals of
$A \omin C(X)$ arise as in \Cref{lie-ideals}, the first part of which
is a generalization of the necessary condition of \cite[Theorem
  3.1]{Mar95}.

\begin{thm}
 Let $X$ be a compact Hausdorff space and $A$ be a simple unital
 $C^*$-algebra with at most one tracial state.  Then a subspace $L$ of
 $A \omin C(X)$ is a closed Lie ideal if and only if
 $$ L = \left\{
 \begin{array}{ll}
     \overline{sl(A) \ot J} + \C 1 \ot S & \mathit{if}\ \mcal{T}(A)
     \neq \emptyset,\ \mathit{and} \\ \overline{A \otimes J + \C 1 \ot
       S} & \mathit{if}\ \mcal{T}(A) = \emptyset,
 \end{array}
 \right.    $$
     for some  closed  ideal $J$ and  closed subspace $S$ in $C(X)$. 
\end{thm}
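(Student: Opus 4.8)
The plan is to prove both directions, handling the two tracial cases separately. The "if" direction is already essentially in hand: when $\mcal{T}(A) \neq \emptyset$, a subspace of the form $\overline{sl(A) \ot J} + \C 1 \ot S$ is a closed Lie ideal by \Cref{lie-ideals}; and when $\mcal{T}(A) = \emptyset$, a subspace of the form $\overline{A \ot J + \C 1 \ot S}$ is a closed Lie ideal by the observation recorded just before the theorem statement. So the substance of the argument lies entirely in the "only if" direction.

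For the forward direction, suppose $L$ is a closed Lie ideal of $A \omin C(X)$. First I would invoke the characterization of closed Lie ideals via topological embracing: since $A \omin C(X)$ is a $C^*$-algebra (indeed, all its closed ideals carry quasi-central approximate identities by \cite{AP, arveson}), \Cref{ideal-lie-ideal} (equivalently the $C^*$-result of Bre\v{s}ar et al.) supplies a closed ideal $I$ of $A \omin C(X)$ with $\overline{[I, A \omin C(X)]} \seq L \seq N(\overline{[I, A \omin C(X)]})$. By \Cref{simple-ideal}, $A$ being simple, this ideal is a product ideal $I = A \omin J$ for some closed ideal $J$ in $C(X)$. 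Now \Cref{commutator} gives $N(\overline{[I, A \omin C(X)]}) = N(I) = N(A \omin J)$, so in fact
\begin{equation*}
\overline{[A \omin C(X),\, A \omin J]} \seq L \seq N(A \omin J).
\end{equation*}

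At this point the two cases diverge, and I would read off each from \Cref{AI-L-NI}. If $\mcal{T}(A) \neq \emptyset$ (the unique-tracial-state case, since $A$ has at most one tracial state), part (2) of \Cref{AI-L-NI} applies directly and yields $L = \overline{sl(A) \ot J} + \C 1 \ot S$ for some closed subspace $S$ of $C(X)$, which is exactly the desired form. If $\mcal{T}(A) = \emptyset$, then part (1) of \Cref{AI-L-NI} gives $\overline{[A \omin C(X), A \omin J]} = A \omin J$, so the lower bound forces $A \omin J \seq L$; I would then use \Cref{NI} to write $N(A \omin J) = A \omin J + \C 1 \ot C(X)$, so that $L$ sits between $A \omin J$ and $A \omin J + \C 1 \ot C(X)$. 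Setting $S := \{\mu \in C(X) : 1 \ot \mu \in L \text{ modulo } A \omin J\}$ and checking that it is a closed subspace, one obtains $L = \overline{A \ot J + \C 1 \ot S}$.

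The main obstacle I anticipate is the emptytrace case: identifying the correct closed subspace $S$ and verifying that $L$ is genuinely the closure $\overline{A \ot J + \C 1 \ot S}$ rather than some larger intermediate space. The cleanest route is to consider the quotient map $A \omin C(X) \to (A \omin C(X))/(A \omin J) \cong A \omin C(F)$ (where $F$ is the closed set determining $J$), under which $L/(A \omin J)$ corresponds to a subspace of $\C 1 \ot C(F)$; defining $S$ as the preimage of the $C(F)$-component and transporting closedness back through the quotient is the step requiring care. Everything else is a matter of assembling the lemmas already established, so I would keep those verifications brief and lean on \Cref{AI-L-NI}, \Cref{NI}, and \Cref{simple-ideal} to carry the structural weight.
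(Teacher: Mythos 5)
Your proposal is correct and follows essentially the same route as the paper: both directions are reduced to \Cref{lie-ideals} and the remark before the theorem for sufficiency, and to \Cref{ideal-lie-ideal}, \Cref{simple-ideal}, \Cref{NI} and \Cref{AI-L-NI} for necessity, with the traceless case handled by noting $A\omin J\seq L\seq A\omin J+\C 1\ot C(X)$. Your quotient construction of $S$ in the traceless case merely fills in a detail the paper leaves implicit, so the two arguments coincide in substance.
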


\begin{proof}
 We just need to prove the only if part in both cases. Let $L$ be a closed Lie
 ideal in $A \omin C(X)$. 

$(1)$ Suppose $\mcal{T}(A) \neq \emptyset$. Since $A \omin C(X)$ is
 a $C^*$-algebra and $A$ is simple, by \Cref{ideal-lie-ideal} and
 \Cref{simple-ideal}, there exists a closed ideal $I = A \omin J $ in
 $A \omin C(X)$ for some closed ideal $J$ in $C(X)$, such that
 $$ \ol{[A \omin C(X), A \omin J]} \seq L \seq N(A\omin J).$$ $L$ then
 has the required form by \Cref{AI-L-NI}.
  \vspace*{1mm}
  
$(2)$ Suppose $A$ has no  tracial states. Since $A \omin C(X)$ is
  a $C^*$-algebra, and $A$ embeds in $A \omin C(X)$ as a
  $C^*$-subalgebra, $A \omin C(X)$ also does not admit any tracial
  state. Therefore, since $A$ is simple, by \Cref{ideal-lie-ideal}, \Cref{commutator1} and
  \Cref{simple-ideal}, there exists a closed ideal $I = A \omin J $ of $A
  \omin C(X)$ for some closed ideal $J$ of $C(X)$, such that
$$ A \omin J \seq L \seq N(A\omin J).$$ Again, as in $(1)$, by
  \Cref{NI}, we see that $N(A \omin J) = A \omin J + \C 1 \otimes
  C(X)$. Now, $A \omin J$ being closed, so is $N(A\omin J)$ implying
  that ${A \omin J + \C 1 \otimes C(X)}$ is closed. In particular, $L$
  must be of the form $$ L = \overline{A \otimes J + \C 1 \ot S }$$
  for some closed subspace $S$ in $C(X)$. 
 \end{proof}

\section{Closed Lie ideals of $A \ot^\alpha K(H)$}\label{cput}

In this section, we will  analyze the Lie ideals of the tensor product spaces
$A \oh K(H)$ and $A \oop K(H)$. For this, we need an auxillary result from
Bre\v{s}ar et. al (\cite{bresar08}). For the sake of completeness, we
include a short discussion on the pre-requisites. Throughout this section, 
 $H$ will be assumed to be a seperable Hilbert space.

Given any unit vector $e$ in a Hilbert space $H$, one considers the
rank one orthogonal projection $p_e : H \ra H$ given by $ p_e (x)
=\langle x , e \rangle e, x \in H$. Then, for any finite orthonormal
system $\mcal{E} = \{e_i : 1 \leq i\leq n\}$ in a Hilbert space $H$,
consider the orthogonal projection $p_{\mcal{E}}:= \sum_i p_{e_i}$ and
the completely positive maps maps $s_{\mcal{E}}, t_{\mcal{E}} : K(H)
\ra K(H)$ given by $ s_{\mcal{E}} (x) = \sum_i p_{e_i} x p_{e_i}$ and
$t_{\mcal{E}} (x) = p_{\mcal{E}} x p_{\mcal{E}}$ for $x \in K(H)$.

Clearly $t_{\mcal{E}}$ is a complete contraction on $K(H)$ and the same is true
about $s_{\mathcal{E}}$,  which can  be seen  as follows.
\begin{lemma}\label{s-elon}
For every finite orthonormal system $\mcal{E} = \{e_i : 1 \leq i\leq
n\}$ in a Hilbert space $H$, $ s_{\mcal{E}}$ is a complete contraction on
$K(H)$.
\end{lemma}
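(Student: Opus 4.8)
The plan is to reduce the completely bounded norm of $s_{\mcal{E}}$ to its ordinary operator norm by exploiting complete positivity. First I would observe that $s_{\mcal{E}}(x) = \sum_i p_{e_i} x p_{e_i}$ is visibly completely positive, being a finite sum of maps of the form $x \mapsto p_{e_i} x p_{e_i}$. Since the completely bounded norm of a completely positive map between $C^*$-algebras coincides with its ordinary norm (see, e.g., \cite{ERbook}), it suffices to prove that $s_{\mcal{E}}$ is contractive, i.e.\ $\|s_{\mcal{E}}(x)\| \le \|x\|$ for all $x \in K(H)$.

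To estimate the ordinary norm, I would compute each summand explicitly. A direct calculation using $p_{e_i}(y) = \langle y, e_i\rangle e_i$ gives $p_{e_i} x p_{e_i} = \langle x e_i, e_i\rangle\, p_{e_i}$, so that
\[
 s_{\mcal{E}}(x) = \sum_{i=1}^n \langle x e_i, e_i\rangle\, p_{e_i}.
\]
Because $\mcal{E}$ is orthonormal, the rank-one projections $p_{e_i}$ have mutually orthogonal ranges and annihilate $(\mathrm{span}\,\mcal{E})^\perp$; hence $s_{\mcal{E}}(x)$ is the diagonal operator with coefficient $\langle x e_i, e_i\rangle$ on the line $\C e_i$ and $0$ elsewhere. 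Its norm is therefore $\|s_{\mcal{E}}(x)\| = \max_{1 \le i \le n} |\langle x e_i, e_i\rangle|$, and each term satisfies $|\langle x e_i, e_i\rangle| \le \|x e_i\|\,\|e_i\| \le \|x\|$. This yields $\|s_{\mcal{E}}\| \le 1$ and, combined with the first paragraph, $\|s_{\mcal{E}}\|_{\mathrm{cb}} = \|s_{\mcal{E}}\| \le 1$.

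The conceptual point, and the one place where orthonormality is essential, is that a sum of $n$ complete contractions is not in general a complete contraction; what rescues us is that orthogonality forces $\sum_i p_{e_i} = p_{\mcal{E}}$ to be a projection of norm $1$ rather than an operator of norm $n$. This is also visible in an alternative, purely structural route: one checks that $s_{\mcal{E}} = \Delta \circ t_{\mcal{E}}$, where $t_{\mcal{E}}(x) = p_{\mcal{E}} x p_{\mcal{E}}$ is the (already complete-contractive) compression onto the corner $p_{\mcal{E}} K(H) p_{\mcal{E}} \cong \M_n$ and $\Delta$ is the conditional expectation of this corner onto its diagonal subalgebra. As a unital completely positive projection onto a $C^*$-subalgebra, $\Delta$ is a complete contraction, and hence so is the composition $s_{\mcal{E}}$. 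I expect the only genuine subtlety to be making the reduction ``completely positive implies cb-norm equals norm'' explicit for the non-unital algebra $K(H)$; this is standard, but if one prefers to avoid invoking it, the factorization route delivers complete contractivity directly.
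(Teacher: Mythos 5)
Your proof is correct, and it obtains complete contractivity by a different mechanism than the paper. The paper never leaves the level of direct norm estimates: it extends $s_{\mcal{E}}$ to $B(H)$, observes that $\sum_i p_{e_i}xp_{e_i}$ is block-diagonal with respect to the orthogonal family $\{p_{e_i}\}$ and hence has norm at most $\|x\|$, and then notes that the $n$-th amplification has exactly the same shape, $\varphi^{(n)}([x_{ij}])=\sum_i Q_i[x_{ij}]Q_i$ with the mutually orthogonal projections $Q_i=\mathrm{diag}(p_{e_i},\dots,p_{e_i})$, so the same diagonal argument runs at every matrix level. You instead dispose of all matrix levels at once by invoking the standard fact that a completely positive map satisfies $\|\cdot\|_{cb}=\|\cdot\|$, and your level-one identity $s_{\mcal{E}}(x)=\sum_i\langle xe_i,e_i\rangle p_{e_i}$ is a sharper, fully explicit form of the paper's diagonal observation (it even gives the exact value $\max_i|\langle xe_i,e_i\rangle|$). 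The one point you rightly flag --- that the CP-implies-$cb$-norm-equals-norm fact needs a word for the non-unital algebra $K(H)$ --- is most painlessly handled as the paper implicitly does, by working with the same formula on the unital algebra $B(H)$, where one gets $\|s_{\mcal{E}}\|_{cb}=\|s_{\mcal{E}}(1)\|=\|p_{\mcal{E}}\|=1$ outright; alternatively, your factorization $s_{\mcal{E}}=\Delta\circ t_{\mcal{E}}$ through the compression onto the corner $p_{\mcal{E}}K(H)p_{\mcal{E}}\cong M_n$ followed by the diagonal conditional expectation is self-contained, avoids the non-unital issue entirely, and is arguably the cleanest of the three arguments. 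There are no gaps in what you wrote.
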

\begin{proof}
It is enough to show that the map $B(H) \ni x
\stackrel{\varphi}{\mapsto} \sum_i p_{e_i} x p_{e_i} \in B(H)$ is a
complete contraction. Consider the orthogonal decomposition $H =
\oplus_{i = 1}^n p_{e_i}H$. Then, for each $x \in B(H)$, $\sum_i
p_{e_i} x p_{e_i}$ corresponds to the $n \times n$ diagonal matrix
operator $\mathrm{diag}(p_{e_1}xp_{e_1}, \cdots,$ $
p_{e_n}xp_{e_n})$. Therefore, $\|\sum_i p_{e_i} x p_{e_i}\| \leq \|
x\|$ for all $x \in B(H)$. Now, for $[x_{ij}] \in M_n(B(H))$, we
have $$\varphi^{(n)} ([x_{ij}]) = [ \varphi(x_{ij})] = \sum_i Q_i
[x_{ij}] Q_i$$ where $Q_i := \mathrm{diag}( p_{e_i}, p_{e_i}, \ldots,
p_{e_i})$ is a projection in $M_n(B(H)) = B(H^{(n)})$. So, as above
$\varphi^{(n)}$ is a contraction on $B(H^{(n)})=M_n(B(H))$ for all $n
\geq 1$.
  \end{proof}

   A tensor norm
$\alpha$  is said to be {\it completely positive uniform} if for every
completely positive map $T_i : A_i\ra B_i$, $i = 1, 2$, the canonical
linear map $T_1 \otimes T_2: A_1 \ot A_2 \ra B_1 \ot B_2$ has a
continuous extension $T_1\otimes^{\alpha} T_2: A_1 \otimes^{\alpha}
A_2 \ra B_1 \otimes^{\alpha} B_2$ satisfying $\|T_1 \otimes^{\alpha}
T_2\| \leq \|T_1\|\|T_2\|$.

Many known tensor norms are completely positive uniform including the
$C^*$-injective norm $\|\cdot\|_{\min}$, the $C^*$-projective norm
$\|\cdot\|_{\max}$, the Haagerup norm $\|\cdot\|_h$, the operator
space projective norm $\|\cdot\|_{\wedge}$ and the Banach space
injective and projective norms $\|\cdot\|_{\lambda}$ and
$\|\cdot\|_{\gamma}$ - see \cite{blecher, ERbook,  Tak}.

The following is an analogue of \cite[Corollary 5.22]{bresar08} and
follows directly from their deep result (\cite[Theorem
  5.15]{bresar08}) which involves some serious algebraic techniques.

\begin{prop}\label{main-thm}
   Let $A$ be a unital $C^*$-algebra and $\alpha$ be a completely
   positive uniform algebra tensor norm. Then a closed subspace of the
   Banach algebra $ A \otimes^{\alpha} K(H) $ is a Lie ideal if and
   only if it is a closed ideal and has the form $\overline{ I \ot K(H)}^{\alpha}$
   for some closed ideal $I$ in $A$.
 \end{prop}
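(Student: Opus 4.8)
The plan is to dispose of the forward implication by a one-line observation and to extract the converse from the deep structural result \cite[Theorem 5.15]{bresar08}, exactly as \cite[Corollary 5.22]{bresar08} does for $\omin$; the only extra ingredient is that the complete positive uniformity of $\alpha$ supplies precisely the compression maps that argument requires. First, the \emph{if} direction: a product ideal $\ol{I \ot K(H)}^\alpha$ is a closed two-sided ideal of $A \ot^\alpha K(H)$, and every closed two-sided ideal is automatically a closed Lie ideal, since $[a,x]=ax-xa$ lies in the ideal whenever $x$ does. So the whole content lies in the converse: a closed Lie ideal $L$ is forced to be a closed ideal of the stated product form.

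For the converse I would first assemble the compression machinery of \Cref{s-elon}. For a finite orthonormal system $\mcal{E}$ of cardinality $n$, the maps $s_{\mcal{E}}, t_{\mcal{E}}:K(H)\ra K(H)$ are completely positive complete contractions, and $t_{\mcal{E}}$ compresses $K(H)$ onto the corner $p_{\mcal{E}}K(H)p_{\mcal{E}}\cong \M_n(\C)$. Since $\alpha$ is completely positive uniform, the canonical maps $\mathrm{Id}_A\ot s_{\mcal{E}}$ and $\mathrm{Id}_A\ot t_{\mcal{E}}$ extend to contractions $\mathrm{Id}_A\ot^\alpha s_{\mcal{E}}$ and $\mathrm{Id}_A\ot^\alpha t_{\mcal{E}}$ on $A\ot^\alpha K(H)$; in particular the compression of $A\ot^\alpha K(H)$ onto the corner $(1\ot p_{\mcal{E}})(A\ot^\alpha K(H))(1\ot p_{\mcal{E}})$, which is algebraically $\M_n(A)$, is available and contractive. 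These are exactly the structural maps on which the proof of \cite[Corollary 5.22]{bresar08} rests. Note also that, as $p_{\mcal{E}}\in K(H)$, the element $1\ot p_{\mcal{E}}$ genuinely lies in $A\ot^\alpha K(H)$, so the double-commutator identity $[1\ot p_{\mcal{E}},[1\ot p_{\mcal{E}},w]]=(1\ot p_{\mcal{E}})w+w(1\ot p_{\mcal{E}})-2(1\ot p_{\mcal{E}})w(1\ot p_{\mcal{E}})$ ties the Lie-ideal condition on $L$ to invariance under these corner compressions.

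With this in place, the argument of \cite[Corollary 5.22]{bresar08} transfers verbatim, with $\omin$ replaced by $\ot^\alpha$: the only properties of the norm it uses are the contractivity of $\mathrm{Id}_A\ot^\alpha s_{\mcal{E}}$ and $\mathrm{Id}_A\ot^\alpha t_{\mcal{E}}$, now guaranteed above. Concretely, for each $\mcal{E}$ the compression $(\mathrm{Id}_A\ot^\alpha t_{\mcal{E}})(L)$ is a Lie ideal of $\M_n(A)$, so \cite[Theorem 5.15]{bresar08} pins it down, up to a central summand, between $[\M_n(A),\M_n(I_{\mcal{E}})]$ and $\M_n(I_{\mcal{E}})$ for a closed ideal $I_{\mcal{E}}\seq A$. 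Letting $\mcal{E}$ run through an increasing net exhausting an orthonormal basis, one has $\mathrm{Id}_A\ot^\alpha t_{\mcal{E}}\ra\mathrm{Id}$ pointwise on $A\ot^\alpha K(H)$: on an elementary tensor $a\ot k$ this reads $a\ot p_{\mcal{E}}kp_{\mcal{E}}\ra a\ot k$ because $p_{\mcal{E}}kp_{\mcal{E}}\ra k$ in norm for compact $k$ and $\alpha$ is sub-cross, and the general case follows since the maps are uniformly contractive and elementary tensors are dense. Thus $L$ is recovered as the closed span of its corner compressions, the ideals $I_{\mcal{E}}$ are forced to be consistent, and they assemble into a single closed ideal $I\seq A$ with $L=\ol{I\ot K(H)}^\alpha$; in particular $L$ is a closed ideal.

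The step I expect to be the genuine obstacle is this limiting argument and, within it, the disappearance of the central ambiguity. At each finite stage \cite[Theorem 5.15]{bresar08} determines the corner Lie ideal only up to a central summand of the form $Z(A)\ot\C p_{\mcal{E}}$, and I must argue that such contributions cannot survive the passage to the limit. Here the non-unitality of $K(H)$ is decisive: since $Z(K(H))=\{0\}$ one gets $Z(A\ot^\alpha K(H))=\{0\}$, so there is no nonzero central Lie ideal to absorb them (in sharp contrast with \Cref{A-B-simple}, where the second factor is unital and $\C(1\ot 1)$ appears). Concretely, a would-be consistent family $z\ot p_{\mcal{E}}$ fails to be Cauchy in $\ot^\alpha$-norm because $\{p_{\mcal{E}}\}$ does not converge in $K(H)$, so the central parts wash out. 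Making the pointwise convergence $\mathrm{Id}_A\ot^\alpha t_{\mcal{E}}\ra\mathrm{Id}$ precise and turning the consistency of the $I_{\mcal{E}}$ into the identification of the single ideal $I$ is the main bookkeeping to be carried out with care.
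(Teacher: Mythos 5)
Your proposal is correct and follows essentially the same route as the paper: the entire content of the paper's proof is to observe that complete positive uniformity of $\alpha$ makes $\mathrm{id}\ot s_{\mcal{E}}$ and $\mathrm{id}\ot t_{\mcal{E}}$ extend to contractions on $A\ot^\alpha K(H)$, and then to invoke \cite[Theorem 5.15]{bresar08} as a black box, exactly as you do. The additional detail you supply about the limiting argument and the vanishing of central summands is really a reconstruction of the internals of that cited theorem rather than a new ingredient, so it does not change the nature of the argument.
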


 \begin{proof}
Let  $\mathcal{E}$ be  a finite orthonormal system in $H$. Since
 $\alpha$ is completely positive uniform, and $s_{\mathcal{E}},
 t_{\mathcal{E}}$ are completely positive and completely contractive,
 the maps $S_{\mathcal{E}}:= \mathrm{id}\ot
 s_{\mathcal{E}},\ T_{\mathcal{E}}:= \mathrm{id}\ot t_{\mathcal{E}} :
 A \ot F(H) \ra A \ot F(H)$ extend continuously on $A \ot^{\alpha}
 K(H)$ and satisfy $ \|S_{\mathcal{E}} \|, \|T_{\mathcal{E}} \| \leq
 1$. Therefore, by \cite[Theorem 5.15]{bresar08}, every closed Lie
 ideal in $ A \otimes^{\alpha} K(H) $ is a closed ideal and has the
 form $\overline{ I \ot K(H)}^{\alpha}$ for some closed ideal $I$ of
 $A$.
 \end{proof}

 \begin{cor}
If $A$ is a unital $C^*$-algebra and $\alpha$ is either the Haagerup
norm or the operator space projective norm, then any closed Lie ideal
of $A \ot^\alpha K(H) $ is precisely of the form $I \ot^\alpha K(H)$
for some closed ideal $I$ in $A$.
 \end{cor}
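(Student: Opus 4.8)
The plan is to deduce this corollary directly from \Cref{main-thm}, so the only genuine work is to verify the hypotheses and then to match notation. First I would observe that both the Haagerup norm $\oh$ and the operator space projective norm $\oop$ are algebra tensor norms which are completely positive uniform; this is exactly what is recorded in the list of examples immediately preceding \Cref{main-thm}. Consequently \Cref{main-thm} applies with $\alpha$ equal to either of these norms, and it tells us that a closed subspace of $A \ot^\alpha K(H)$ is a Lie ideal if and only if it is a closed ideal of the form $\ol{I \ot K(H)}^\alpha$ for some closed ideal $I$ in $A$. In particular this already gives both directions of the ``precisely'' assertion: every closed Lie ideal arises this way, and every closed ideal of this form is a Lie ideal.

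The remaining step is purely one of identification, namely to rewrite the description $\ol{I \ot K(H)}^\alpha$ as the product ideal $I \ot^\alpha K(H)$ appearing in the statement. For both $\alpha = \oh$ and $\alpha = \oop$, a closed ideal $I$ of $A$ produces a product ideal $I \ot^\alpha K(H)$ which is already closed inside $A \ot^\alpha K(H)$ and which coincides, completely isometrically, with the closure $\ol{I \ot K(H)}^\alpha$ of the algebraic tensor product. This is precisely the content of \cite[Theorem 3.8]{ass} and \cite[Proposition 3.2]{jk11}, already invoked in \Cref{comm-tensor}(2). Substituting this identification into the conclusion of \Cref{main-thm} converts $\ol{I \ot K(H)}^\alpha$ into $I \ot^\alpha K(H)$ and yields the stated form.

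The main obstacle, and really the only nontrivial point, is this identification step: one must be sure that the abstractly formed operator space tensor product $I \ot^\alpha K(H)$ sits as a completely isometric copy of $\ol{I \ot K(H)}^\alpha$ inside $A \ot^\alpha K(H)$. For the Haagerup norm this is immediate from its injectivity as an operator space tensor norm, so the embedding $I \oh K(H) \seq A \oh K(H)$ is a complete isometry and the closure of $I \ot K(H)$ is exactly $I \oh K(H)$. For the operator space projective norm the embedding is not forced by injectivity, and one must instead appeal to the explicit analysis of product ideals in \cite{ass, jk11}; once that is in hand, both inclusions are routine and the corollary follows.
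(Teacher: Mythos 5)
Your proposal is correct and follows essentially the same route as the paper: verify that $\oh$ and $\oop$ are completely positive uniform algebra tensor norms, invoke \Cref{main-thm}, and then identify $\ol{I \ot K(H)}^\alpha$ with $I \ot^\alpha K(H)$ via injectivity for the Haagerup norm and the known embedding of the product ideal for the operator space projective norm (the paper cites \cite[Theorem 5]{Kum01} for the latter, but this is only a difference of reference, not of argument).
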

 \begin{proof}
The Haagerup norm $\|\cdot\|_h$ and the operator space projective norm
$ \|\cdot\|_{\wedge} $ are completely positive uniform algebra tensor
norms (\cite[Proposition 2]{blecher} and \cite[Theorem
  3.2]{ERbook}). The assertion made in the statement then follows from
\Cref{main-thm}, the injectivity of $\ot_h$ and the fact that $I \oop
K(H) \cong \ol{I \ot K(H)} \seq A \oop K(H)$ (\cite[Theorem
  5]{Kum01}).
 \end{proof}

 \noindent{\bf Acknowledgements.}
 The authors would like to thank Leonel Robert for bringing to our
 notice his recent work \cite{Rob} which was instrumental in the
 improvement of \Cref{q-c-a-i}.


\end{document}